\newtheorem{theorem}{Theorem}[section]
\newtheorem{lemma}[theorem]{Lemma}
\newtheorem{note}[theorem]{Note}
\newtheorem{prop}[theorem]{Proposition}
\newtheorem{cor}[theorem]{Corollary}
\newtheorem{exa}[theorem]{Example}
\newtheorem*{Theorem1'}{Theorem 1'}
\theoremstyle{definition}
\theoremstyle{remark}
\numberwithin{equation}{section}
\newcommand \C{{\mathbb C}}
\newcommand \End{{\mathrm {End}}}
\newcommand \Hom{{\mathrm {Hom}}}
\newcommand \dm{{\mathrm {dim}}}
\newcommand \R{{\mathbb R}}
\newcommand \GL{{\mathrm {GL}}}
\newcommand \FGL{{\mathrm {FGL}}}
\newcommand \Aut{{\mathrm{ Aut}}}
\newcommand \ind{{\mathrm {ind}}}
\newcommand \res{{\mathrm {res}}}
\begin{document}

\title [\small{Clifford theory for infinite dimensional modules}] {\small{Clifford theory for infinite dimensional modules}}

\author{Fernando Szechtman}
\address{Department of Mathematics and Statistics, University of Regina}
\email{fernando.szechtman@gmail.com}
\thanks{The author was supported in part by an NSERC discovery grant}

\subjclass[2010]{20C15, 20C20}



\keywords{Clifford theory; Mackey theorem; density theorem}

\begin{abstract} Clifford theory of possibly infinite dimensional
modules is studied.
\end{abstract}

\maketitle

\section{Introduction}

Let $F$ be a field and let $N\unlhd G$ be groups. In 1937 Clifford
\cite{C} laid down a strategy for studying the finite dimensional
irreducible $FG$-modules in a two-step program that works under
the assumption that $[G:N]$ be finite. In the first place, every
such a module, say $V$, is isomorphic to $\ind_T^G S$, where $T$
is the inertia group of an irreducible $FN$-submodule $W$ of $V$
and $S$ is an irreducible $FT$-module, namely the $W$-homogeneous
component of $\res_N^G V$. Moreover, $S\mapsto \ind_T^G S$ yields
a bijective correspondence between the isomorphism classes of
finite dimensional irreducible modules of $FT$ and $FG$ lying over
$W$. This reduces the above study to the case $T=G$. For this
stage Clifford assumed that $F$ is algebraically closed.
Accordingly, there is a projective representation $X:G\to\GL(W)$
extending $R$ with factor set $\alpha\in Z^2(G,F^*)$ constant on
cosets of $N$. Set $\beta=\alpha^{-1}$ with corresponding
$\delta\in Z^2(G/N,F^*)$. Then $U\mapsto U\otimes_F W$ yields a
bijective correspondence between the isomorphism classes of
irreducible modules of the twisted group algebra $F^\delta [G/N]$
and the isomorphism classes of finite dimensional irreducible
modules of $FG$ lying over $W$. Here $U$ affords the irreducible
projective representation~$Y:G\to\GL(U)$ defined by~$F^\beta
[G]\to F^\delta [G/N]\to\GL(U)$ and $U\otimes_F W$ affords the
irreducible representation $R:G\to\GL(U\otimes_F W)$ given by
$R(g)=Y(g)\otimes X(g)$, $g\in G$.

Several authors have extended one or both of the above
correspondences to a much wider setting. The first extension, due
to Mackey \cite{M2}, involved unitary representations of locally
compact groups. About a decade later, Dade \cite{D}, Fell
\cite{F}, Ward \cite{W} and Cline \cite{Cl} extended Clifford's
ideas to an axiomatic theory of group-graded rings called Clifford
systems. A further abstraction by Dade \cite{D2} became
essentially categorical in nature. A purely categorical view of
the theory is taken by Alperin \cite{A}, Galindo \cite{G,G2} and
Lizasoain \cite{Li}. On the other hand, a Clifford correspondence
from the point of view of normal subrings of a ring was studied by
Rieffel \cite{R}, and a Clifford theory of Hopf algebras was
investigated Schneider \cite{Sc}, van Oystaeyen and Zhang
\cite{VZ}, Witherspoon \cite{Wi} and Burciu \cite{B}. A more
recent paper by Witherspoon \cite{Wi2} unifies some prior work on
the subject and furnishes a number of versions of Clifford
correspondence for finite dimensional modules of an associative
algebra over an algebraically closed field.


As far as we know, the infinite dimensional purely algebraic case
of the above correspondences has not been considered so far. One
possible reason for this is that Clifford theory breaks down at
the very outset: an infinite dimensional $FG$-module may lack
irreducible $FN$-submodules. A generic way to produce such
examples can be found in \S\ref{sec:ex1}. This phenomenon is
impossible when $[N:1]$ or $[G:N]$ is finite (cf. \cite[Theorem
7.2.16]{P}). It is also impossible if one considers
representations $R:G\to\FGL(V)$, where $\FGL(V)$ is the subgroup
of $\GL(V)$ of all finitary automorphisms of $V$. This follows
from the following general result due independently to
Meierfrankenfeld \cite{Me} and  Wehrfritz \cite{We} (who does not
require $F$ to be commutative): if $G$ is an irreducible subgroup
of $\FGL(V)$ and $H$ is an ascentant subgroup (this a
generalization of the notion of subnormal subgroup) of $G$ then
$H$ is completely reducible. Further results on primitive and
irreducible subgroups of $\FGL(V)$, with $V$ infinite dimensional,
can be found in \cite{Me}, \cite{We2} as well as in \cite{We3} and
references therein.

In this paper (cf. \S\ref{sec:cli}) we extend both Clifford
correspondences to the infinite dimensional case under bare
minimum assumptions. As corollaries we obtain the infinite
dimensional analogue of Gallagher's theorem \cite[Theorem 2]{G}
describing the complex irreducible characters of $G$ lying over a
given irreducible character of $N$ extendible to $G$, as well as
the infinite dimensional analogue of the well-known (cf. \cite[\S
3]{S}) description of the complex irreducible characters of the
direct product of two groups.

As an application that makes essential use of most of the results
in this paper, \cite{SHI} studies representations of McLain's
groups $M=M(\Lambda, \leq, R)$, where $(\Lambda,\leq)$ is a
totally ordered set and $R$ is a fairly general type of ring (see
\cite{Ma} for the original use of these groups). We stress the
fact that $\Lambda$ may be infinite, $R$ need not be finite or
commutative, the representations of $M$ are allowed to be infinite
dimensional, and very mild restrictions are imposed on the
underlying field. Nothing seems to be known about the
representation theory of $M$ in this generality. In the special
case when $|\Lambda|=n$ is finite and $R=F_q$ is a finite field,
then $M=U_n(q)$. The results of this paper are used in \cite{SHI}
to extend to arbitrary $M$ many of the results concerning the
complex supercharacters of $U_n(q)$, first encountered by Lehrer
\cite{Le} and further developed by Andr\'{e} \cite{An}. The
subject has attracted considerable attention since~\cite{An},
specially after its axiomatization by Diaconis and Isaacs
\cite{DI}. A second application of our results appears in \cite{Sz}.

It should also be noted that Mackey's decomposition theorem and
tensor product theorem (cf. \cite[Theorems 1 and 2]{M}) are valid
under no assumptions whatsoever. For finite groups, the latter is
usually obtained as a consequence of the former with the aid of
some auxiliary result (Lemma 2 in \cite{M}, Theorem 43.2 in
\cite{CR}, and Theorem 5.17.3 in \cite{J}). We begin the paper
(cf. \S\ref{sec:ma}) by furnishing, under no restrictions, short
proofs of both \cite[Theorems 1 and 2]{M} which highlight the very
reason why they are true: to determine the structure of an
imprimitive module, just find the orbits, the stabilizers and the
nature of the modules the latter stabilize.

In \S\ref{sec:ex2} we provide examples of a curious phenomenon
related to Clifford theory: it is
possible for $\ind_N^G W$ to be irreducible with inertia group
$I_G(W)=G\neq N$. This is in stark contrast to the well-known
irreducibility criterion valid for finite groups and complex representations (cf. \cite[\S 7]{S}).
We also furnish sufficient conditions for $I_G(W)=G$ and the irreducibility of $\ind_N^G W$ to force $G=N$.

All rings have a unit, shared by all their subrings. All modules
are left modules unless explicitly mentioned otherwise. Groups may
be infinite and representations are allowed to be infinite
dimensional.

\section{Mackey decomposition and tensor product theorems}\label{sec:ma}

We fix an arbitrary field $F$ throughout this section.

\begin{theorem}\label{m2} (Mackey Tensor Product Theorem) Let $G$ be a group with subgroups $H_1$ and
$H_2$, and let $V_1$ and $V_2$ be modules for
$H_1$ and $H_2$ over~$F$, respectively.

For $x,y\in G$ set $H^{(x,y)}=xH_1x^{-1}\cap yH_2y^{-1}$ and let
$V_1^x$ and $V_2^y$ denote the $F$-spaces $V_1$ and $V_2$,
respectively viewed as $FH^{(x,y)}$-modules via:
$$
h\cdot v_1=x^{-1}hx v_1,\quad h\in H^{(x,y)}, v_1\in V_1,
$$
$$
h\cdot v_2=y^{-1}hy v_2,\quad h\in H^{(x,y)}, v_2\in V_2.
$$
Then the $FG$-module $\ind_{H_1}^G V_1\otimes \ind_{H_2}^G V_2$
has the following decomposition:
$$
\ind_{H_1}^G V_1\otimes \ind_{H_2}^G V_2\cong \underset{d\in
D}\bigoplus V(d),
$$
where $D$ is a system of representatives for the
$(H_1,H_2)$-double cosets in $G$ and
$$
V(d)\cong \ind_{H^{(x,y)}}^G V_1^x\otimes V_2^y,\quad d\in D,
$$
for any choice of $(x,y)\in G\times G$ such that
$H_1x^{-1}yH_2=H_1dH_2$ (any two choices yield isomorphic $FG$-modules).
In particular,
$$
\ind_{H_1}^G V_1\otimes \ind_{H_2}^G V_2\cong \underset{y\in
D}\bigoplus \ind_{H_1\cap yH_2y^{-1}}^G V_1\otimes V_2^y.
$$
\end{theorem}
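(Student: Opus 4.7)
The plan is to view $\ind_{H_1}^G V_1 \otimes \ind_{H_2}^G V_2$ as an imprimitive $FG$-module and apply the general principle the introduction highlights: find the $G$-orbits on an indexing set, identify their stabilizers, and read off the module each stabilizer stabilizes. This reduces the theorem to a bookkeeping exercise once the correct indexing is set up.

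First I would realize $\ind_{H_1}^G V_1$ as $\bigoplus_{xH_1 \in G/H_1} (x\otimes V_1)$ and $\ind_{H_2}^G V_2$ as $\bigoplus_{yH_2 \in G/H_2} (y\otimes V_2)$ using fixed systems of left coset representatives, so that as an $F$-space
\[
\ind_{H_1}^G V_1 \otimes \ind_{H_2}^G V_2 \;=\; \bigoplus_{(xH_1,yH_2) \in G/H_1 \times G/H_2} (x\otimes V_1)\otimes (y\otimes V_2),
\]
and $G$ permutes the index set $G/H_1 \times G/H_2$ diagonally. A direct computation shows that $(xH_1,yH_2)$ and $(x'H_1,y'H_2)$ lie in the same $G$-orbit iff $H_1 x^{-1}y H_2 = H_1 {x'}^{-1}y' H_2$, so the orbits are in bijection with the $(H_1,H_2)$-double cosets, parametrized by $D$. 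A second direct computation shows that the stabilizer of $(xH_1,yH_2)$ is exactly $H^{(x,y)} = xH_1x^{-1} \cap yH_2y^{-1}$.

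Next I would fix an orbit, choose a representative $(x,y)$ with $H_1 x^{-1}yH_2 = H_1 d H_2$, and identify the corresponding summand $V(d)$ as the $FG$-submodule generated by $W_{x,y} := (x\otimes V_1)\otimes (y\otimes V_2)$. Since the stabilizer of $W_{x,y}$ is $H^{(x,y)}$ and its $G$-translates are the direct summands parametrized by the orbit, standard imprimitivity gives $V(d) \cong \ind_{H^{(x,y)}}^G W_{x,y}$ as $FG$-modules. It then remains to compute the $FH^{(x,y)}$-action on $W_{x,y}$: for $h\in H^{(x,y)}$ one has $hx = x(x^{-1}hx)$ with $x^{-1}hx \in H_1$, and similarly for $y$, so
\[
h\cdot\bigl((x\otimes v_1)\otimes (y\otimes v_2)\bigr) = \bigl(x\otimes (x^{-1}hx)v_1\bigr)\otimes\bigl(y\otimes (y^{-1}hy)v_2\bigr),
\]
which is precisely the $FH^{(x,y)}$-module $V_1^x\otimes V_2^y$. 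Independence of the chosen $(x,y)$ within the double coset is a short check: replacing $(x,y)$ by $(xh_1,yh_2)$ or translating by an element of $G$ produces an obviously isomorphic $FH^{(x,y)}$-module, so the induced module is well-defined up to isomorphism. For the ``in particular'' statement I would specialize by picking $x = e$, which lets me index double cosets by a transversal $D \subset G$ of $H_1\backslash G/H_2$, yielding $H^{(e,y)} = H_1\cap yH_2y^{-1}$ and $V_1^e = V_1$.

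The main obstacle is genuinely just the first step: one must be careful that no finiteness is used when passing from the pointwise stabilizer description of an orbit to the induced-module description, since $D$, $G/H_1$, and $G/H_2$ may all be infinite and $V_1, V_2$ infinite dimensional. All the arguments above, however, use only direct sum decompositions indexed by sets (orbits and cosets), so they go through verbatim in the infinite setting; the rest is the straightforward double-coset/stabilizer calculation.
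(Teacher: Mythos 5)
Your proposal is correct and follows essentially the same route as the paper: decompose the tensor product over $G/H_1\times G/H_2$, identify the $G$-orbits with $(H_1,H_2)$-double cosets via $(xH_1,yH_2)\mapsto H_1x^{-1}yH_2$, compute the stabilizer $H^{(x,y)}$, and recognize the stabilized summand as $V_1^x\otimes V_2^y$ before invoking imprimitivity. No gaps; your extra checks (well-definedness within a double coset, the $x=e$ specialization) are just the details the paper leaves implicit.
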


\begin{proof} We have
$$
\ind_{H_1}^G V_1=\underset{xH_1\in G/H_1}\bigoplus xV_1,\quad
\ind_{H_2}^G V_2=\underset{yH_2\in G/H_2}\bigoplus yV_2,
$$
where $G/H_i$ is the set of all left cosets of $H_i$ in $G$,
$1\leq i\leq 2$. Therefore,
$$
\ind_{H_1}^G V_1\otimes \ind_{H_2}^G V_2=\underset{(xH_1,yH_2)\in
G/H_1\times G/H_2}\bigoplus xV_1\otimes yV_2.
$$
The function $G/H_1\times G/H_2\to H_1\!\!\setminus\! G/H_2$ given by
$(xH_1,yH_2)\mapsto H_1x^{-1}yH_2$ is clearly surjective and its
fibers are the $G$-orbits of $G/H_1\times G/H_2$ under
componentwise left multiplication, that is, $(x_1H_1,y_1H_2)$ and
$(x_2H_1,y_2H_2)$ are in the same $G$-orbit if and only if
$$
 H_1x_1^{-1}y_1H_2=H_1x_2^{-1}y_2H_2.
$$
Moreover, the stabilizer in $G$ of a given $(xH_1,yH_2)\in
G/H_1\times G/H_2$ is $H^{(x,y)}$ and $V_1^x\otimes V_2^y\cong xV_1\otimes yV_2$
as $H^{(x,y)}$-modules via $v_1\otimes v_2\mapsto xv_1\otimes yv_2$.
\end{proof}

\begin{theorem}\label{m1} (Mackey Decomposition Theorem) Let $G$ be a group with subgroups $H$ and $K$ and let $V$ be an
$FH$-module. Then
$$
\res_K^G\, \ind_H^G V\cong\underset{x\in D}\bigoplus \ind_{K\cap
xHx^{-1}}^K V^x,
$$
where $D$ is a system of representatives for the
$(K,H)$-double cosets in $G$ and $V^x$ is the $F$-vector space $V$ being acted upon by $K\cap
xHx^{-1}$ as follows:
$$
h\cdot v=x^{-1}hx v,\quad h\in K\cap xHx^{-1}, v\in V.
$$
\end{theorem}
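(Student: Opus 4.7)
The plan is to apply the same imprimitive-module analysis that drives the proof of Theorem~\ref{m2}, but now to the single induced module $\ind_H^G V$ viewed under restriction to $K$. The decomposition $\ind_H^G V=\bigoplus_{xH\in G/H}xV$ is purely vectorial and therefore survives restriction; what changes is that the permutation action of $G$ on $G/H$ is cut down to the action of $K$ on $G/H$ by left multiplication.

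First I would observe that the $K$-orbits on $G/H$ correspond bijectively to the $(K,H)$-double cosets in $G$, the orbit of $xH$ being $\{kxH : k\in K\}$ and corresponding to $KxH$. Choosing a system of representatives $D$, the restricted module accordingly splits as $\res_K^G\ind_H^G V=\bigoplus_{x\in D}W_x$, where $W_x=\bigoplus_{kxH\in K\cdot xH}kxV$ is an $FK$-submodule.

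Second I would identify each $W_x$ as an induced module. The stabilizer of the basepoint $xH$ under the $K$-action is $\Stab_K(xH)=K\cap xHx^{-1}$, and since $K$ permutes the summands of $W_x$ transitively with this stabilizer, one obtains $W_x\cong\ind_{K\cap xHx^{-1}}^K(xV)$, where $xV$ carries the restricted action of $K\cap xHx^{-1}$. The final step is to recognize $xV$ as $V^x$: for $h\in K\cap xHx^{-1}$ and $v\in V$, the equality $h\cdot(xv)=(hx)v=x(x^{-1}hx)v$ shows that, under the $F$-linear identification $xV\to V$, $xv\mapsto v$, the $(K\cap xHx^{-1})$-action on $xV$ transports to $h\cdot v=(x^{-1}hx)v$, which is exactly the defining action on $V^x$.

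The main, and really the only, obstacle is bookkeeping: one must verify that the isomorphism $W_x\cong\ind_{K\cap xHx^{-1}}^K V^x$ respects the full $K$-structure, not merely that of the stabilizer. This is automatic from the transitivity of $K$ on the orbit $K\cdot xH$ together with the identification of $V^x$ as the stabilizer-module at the basepoint; it is the same mechanism used in the proof of Theorem~\ref{m2} and requires no new ingredient.
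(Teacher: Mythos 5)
Your proposal is correct and follows essentially the same route as the paper: decompose $\ind_H^G V=\bigoplus_{xH\in G/H}xV$, note that the $K$-orbits on $G/H$ correspond to $(K,H)$-double cosets with stabilizer $K\cap xHx^{-1}$ at $xH$, and identify $xV\cong V^x$ via $xv\mapsto v$. No gaps; the bookkeeping you mention is exactly the mechanism the paper itself invokes.
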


\begin{proof} We have
$$
\ind_{H}^G V=\underset{xH\in G/H}\bigoplus xV,
$$
where $G/H$ is the set of all left cosets of $H$ in $G$. When $K$ acts on $G/H$ by left multiplication, $xH$ and $yH$  are
in the same $K$-orbit if and only if
$$
KxH=KyH.
$$
Moreover, the stabilizer in $K$ of a given $xH\in
G/H$ is $K\cap xHx^{-1}$ and, in addition, $V^x\cong xV$ as $K\cap xHx^{-1}$-modules via $v\mapsto xv$.
\end{proof}

\begin{cor} Let $G$ be a group with subgroups $H_1$ and
$H_2$. Let $V_2$ be an $FH_2$-module and let $P$ be the
permutation $FG$-module arising from the coset space $G/H_1$. Then
$$
P\otimes \ind_{H_2}^G V_2\cong \ind_{H_1}^G \res_{H_1}^G \ind_{H_2}^G V_2.
$$
\end{cor}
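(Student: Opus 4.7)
The plan is to recognize the permutation module $P$ as an induced module and then apply the two Mackey theorems just proved to rewrite both sides of the claimed isomorphism as the same direct sum indexed by $(H_1,H_2)$-double cosets.

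First I would observe that $P \cong \ind_{H_1}^G F$, where $F$ denotes the trivial $FH_1$-module: by definition $P = \bigoplus_{xH_1 \in G/H_1} F\cdot(xH_1)$ with $G$ permuting the basis by left multiplication, which is exactly how $\ind_{H_1}^G F$ decomposes in Theorem~\ref{m2}'s notation. With this identification, Theorem~\ref{m2} applied with $V_1=F$ yields
$$
P\otimes \ind_{H_2}^G V_2 \cong \bigoplus_{y\in D} \ind_{H_1\cap yH_2 y^{-1}}^G\bigl(F\otimes V_2^y\bigr) \cong \bigoplus_{y\in D}\ind_{H_1\cap yH_2 y^{-1}}^G V_2^y,
$$
since the $H_1\cap yH_2y^{-1}$-action on $F$ is trivial, so $F\otimes V_2^y\cong V_2^y$ canonically.

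Next I would handle the right-hand side via Theorem~\ref{m1} applied with $K=H_1$ and $H=H_2$:
$$
\res_{H_1}^G \ind_{H_2}^G V_2 \cong \bigoplus_{y\in D}\ind_{H_1\cap yH_2 y^{-1}}^{H_1} V_2^y,
$$
indexed by the same system $D$ of $(H_1,H_2)$-double coset representatives. Inducing from $H_1$ up to $G$, and using the transitivity of induction $\ind_{H_1}^G\circ\ind_{H_1\cap yH_2 y^{-1}}^{H_1}=\ind_{H_1\cap yH_2y^{-1}}^G$, gives
$$
\ind_{H_1}^G \res_{H_1}^G \ind_{H_2}^G V_2 \cong \bigoplus_{y\in D}\ind_{H_1\cap yH_2 y^{-1}}^G V_2^y,
$$
which matches the expression obtained for $P\otimes \ind_{H_2}^G V_2$.

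I do not anticipate a serious obstacle here: both theorems are in hand, and the only real content is the identification $P\cong\ind_{H_1}^G F$ together with transitivity of induction and commutation of induction with direct sums. The mildest subtlety is making sure the \emph{same} double-coset system $D$ is used on both sides so that the summands are compared term by term; since Theorems~\ref{m2} and~\ref{m1} both allow an arbitrary choice of $D$, this is immediate.
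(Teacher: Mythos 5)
Your proof is correct and follows essentially the same route as the paper: Theorem~\ref{m2} with $V_1$ trivial for the left-hand side, Theorem~\ref{m1} plus transitivity of induction and its compatibility with direct sums for the right-hand side, with both sides landing on the same double-coset-indexed direct sum. The paper's proof is just a one-line outline of these same steps, which you have filled in correctly.
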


\begin{proof} First use Theorem \ref{m2} with $V_1$ trivial, then the transitivity of induction
and the compatibility of induction with direct sums, and finally apply Theorem \ref{m1}.
\end{proof}

Suppose $H\leq G$ and let $W$ (resp. $V$) is a module for $FH$
(resp. $FG$). Then
\begin{equation}
\label{resext} (\ind_H^G W)\otimes V\cong \ind_H^G (W\otimes
\res_H^G V),
\end{equation}
a fairly well-known result in the finite dimensional case. As for
the proof, we have
$$
(\ind_H^G W)\otimes V\cong (\underset{xH\in G/H}\bigoplus
xW)\otimes V\cong \underset{xH\in G/H}\bigoplus (xW\otimes V),
$$
where $G$ transitively permutes the $xW\otimes V$ and the
stabilizer of $W\otimes V$ is $H$.

When $[G:H_1]$ and $[G:H_2]$ are finite, \cite{J} proves Theorem
\ref{m2} by means of Theorem \ref{m1} and (\ref{resext}), although
only a restricted version of (\ref{resext}) is considered, with a
somewhat longer proof. Incidently, a very special case of
(\ref{resext}) appears in \cite[Proposition 1.1]{L}, again with a
fairly longer proof.

\section{Clifford Theory}\label{sec:cli}

We fix an arbitrary field $F$ throughout this section.

\begin{lemma}\label{u1} Let $G$ be a group acting on a group~$N$ via
automorphisms. Let $W$ be an $FN$-module. For $g\in G$, consider
the $FN$-module~$W^g$, whose underlying $F$-vector space is $W$,
acted upon by $N$ as follows:
$$
x\cdot w= {}^g x w,\quad  x\in N,w\in W.
$$
Then
$$
I_G(W)=\{g\in G\,|\, W^g\cong W\}
$$
is subgroup of $G$. Moreover, if $W$ is isomorphic to an $FN$-module $U$, then
$W^g\cong U^g$ for all $g\in G$.
\end{lemma}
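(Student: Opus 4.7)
The plan is to dispatch the second assertion first, since it functions as a functoriality tool that will be applied repeatedly when verifying the subgroup axioms for $I_G(W)$. The computational heart of both parts is the identity
$$
(W^g)^h = W^{gh}
$$
of $FN$-modules, which is immediate from the composition rule ${}^g({}^h x) = {}^{gh} x$ for the action of $G$ on $N$: the action of $x \in N$ on $(W^g)^h$ takes $w$ to ${}^h x\cdot_{W^g} w = {}^g({}^h x)\cdot_W w = {}^{gh}x\cdot_W w$, which is by definition the action of $x$ on $W^{gh}$.

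For the second claim, given an $FN$-module isomorphism $\varphi\colon W\to U$, I would use the same underlying $F$-linear map as a candidate isomorphism $W^g\to U^g$ and verify $FN$-linearity under the twisted actions by the one-line check
$$
\varphi(x\cdot_{W^g} w) = \varphi({}^g x\cdot w) = {}^g x\cdot\varphi(w) = x\cdot_{U^g}\varphi(w),
$$
the middle equality being the original $FN$-linearity of $\varphi$ applied to the element ${}^g x\in N$.

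For $I_G(W)$, I would verify the three subgroup axioms. First, $W^e = W$ literally, since ${}^e x = x$, so $e\in I_G(W)$. Second, if $g,h \in I_G(W)$, then $W^g\cong W$ gives $(W^g)^h\cong W^h$ by the second assertion, and combining with $W^h\cong W$ and the composition identity yields $W^{gh}\cong W$. Third, if $g\in I_G(W)$, the second assertion applied to $W^g\cong W$ with twisting element $g^{-1}$ produces $(W^g)^{g^{-1}}\cong W^{g^{-1}}$; the composition identity reduces the left-hand side to $W^e = W$, giving $W^{g^{-1}}\cong W$.

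There is no genuine obstacle here. The only point requiring some care is the consistent tracking of the composition order, so that $(W^g)^h$ really does equal $W^{gh}$ (rather than $W^{hg}$) and the product and inverse arguments close up correctly.
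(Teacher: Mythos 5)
Your proof is correct and follows the same approach as the paper: the paper's entire proof is the observation that an $FN$-isomorphism $f:W\to U$ is also an $FN$-isomorphism $W^g\to U^g$ (your second claim), with the subgroup verification left implicit. Your added details — the identity $(W^g)^h=W^{gh}$ from the left action ${}^{gh}x={}^g({}^hx)$ and the explicit check of the subgroup axioms — are exactly the routine steps the paper suppresses, and they are carried out correctly.
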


\begin{proof} Immediate consequence of the following:
if $g\in G$ and $f:W\to U$ is an $FN$-isomorphism, then so is
$f:W^g\to U^g$.
\end{proof}

We refer to $I_G(W)$ as the inertia group of $W$. If $I_G(W)=G$ we will say that $W$ is $G$-invariant.
We will mainly use this when $N\unlhd G$ and $G$ acts on $N$ by conjugation: $${}^g x=gxg^{-1}.$$

Given a ring $R$, an $R$-module $V$ and an irreducible submodule
$W$ of $V$, by the $W$-homogeneous component of $V$ we understand
the sum of all irreducible submodules of $V$ isomorphic to $W$.

\begin{theorem}\label{cl1} (Clifford) Let $N\unlhd G$ be groups and let
$V$ be an irreducible $FG$-module. Suppose that $V$ admits an
irreducible $FN$-submodule $W$. Then

(a) $V=\underset{g\in G}\sum gW$ is a completely reducible
$FN$-module.

(b) $\res_N^G V$ is the direct sum of its homogeneous components
and $G$ acts transitively on them.

(c) The $G$-stabilizer $T$ of any homogeneous component $S$ of
$\res_N^G V$ is the inertia group of any $FN$-irreducible
submodule of $S$.

(d)  $V\cong \ind_T^G S$ and $S$ is $FT$-irreducible.
\end{theorem}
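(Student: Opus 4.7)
The plan is to follow the standard finite-dimensional argument, paying attention to the points where infinite-dimensionality could matter.

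First, for part (a), I would observe that for any $g\in G$ the subspace $gW$ is an $FN$-submodule of $V$, because $n(gw)=g(g^{-1}ng)w$ for $n\in N$, $w\in W$. In fact, the map $w\mapsto gw$ gives an $FN$-isomorphism $W^{g^{-1}}\to gW$ in the notation of Lemma \ref{u1}, so each $gW$ is irreducible. The sum $U=\sum_{g\in G}gW$ is $G$-stable (from $hgW=(hg)W$) and nonzero, so $U=V$ by irreducibility of $V$. A module which is the sum of irreducible submodules is completely reducible (standard fact, proved for instance by a Zorn's lemma argument), so $\res_N^G V$ is completely reducible.

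For part (b), the homogeneous component decomposition is standard for any completely reducible module. The action of $G$ permutes irreducible $FN$-submodules (as $g$ sends an irreducible submodule $X$ to $gX\cong X^{g^{-1}}$) and hence sends the homogeneous component of isomorphism class $[X]$ to that of $[X^{g^{-1}}]$, so $G$ permutes the homogeneous components. Any union of a $G$-orbit of homogeneous components is a $G$-stable subspace of $V$, hence either zero or all of $V$, forcing transitivity. Part (c) is then essentially bookkeeping: if $S$ is the homogeneous component containing the irreducible submodule $W'$, then $gS$ is the homogeneous component of $gW'\cong (W')^{g^{-1}}$, and so $gS=S$ iff $(W')^{g^{-1}}\cong W'$ iff $g^{-1}\in I_G(W')$ iff $g\in I_G(W')$ (since the inertia group is a group by Lemma \ref{u1}).

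Part (d) is the main point. Let $T=\Stab_G(S)$, let $\{g_i\}$ be a system of representatives for $G/T$, so that $V=\bigoplus_i g_iS$ by (b). Define $\varphi:\ind_T^G S=FG\otimes_{FT}S\to V$ by $g\otimes s\mapsto gs$; this is a well-defined $FG$-homomorphism because $S$ is $T$-stable. Using the decomposition $FG=\bigoplus_i g_i\,FT$ of $FG$ as a free right $FT$-module, we obtain $\ind_T^G S=\bigoplus_i g_i\otimes S$, and $\varphi$ restricts to the obvious bijection $g_i\otimes S\to g_iS$ on each summand. Hence $\varphi$ is an isomorphism. Finally, if $0\neq S'\subseteq S$ were a proper $FT$-submodule, then freeness of $FG$ over $FT$ makes $\ind_T^G S'\hookrightarrow \ind_T^G S\cong V$ a proper nonzero $FG$-submodule, contradicting irreducibility of $V$; so $S$ is $FT$-irreducible.

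The subtle point in infinite dimensions is really just part (a), where one must invoke the Zorn-type argument that a sum of irreducibles is completely reducible; the remaining steps are formal and work verbatim as in the finite-dimensional case, since everything rests on the direct sum decomposition $FG=\bigoplus_i g_i\,FT$ and the $G$-orbit analysis that yielded Theorems \ref{m2} and \ref{m1} above.
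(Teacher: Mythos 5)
Your proposal is correct and follows essentially the same route as the paper's (much terser) proof: complete reducibility from the sum of the translates $gW$, transitivity of $G$ on homogeneous components via the isomorphism $gW\cong W^{g^{-1}}$, identification of the stabilizer with the inertia group, and $V\cong\ind_T^G S$ from the direct sum decomposition, with irreducibility of $S$ forced by that of $V$. Your write-up simply supplies the details (the Zorn-type complete reducibility argument and the explicit isomorphism $FG\otimes_{FT}S\to V$) that the paper leaves implicit.
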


\begin{proof} (a) The sum of irreducible submodules is the direct
sum of some of them.

(b) That $G$ acts follows from Lemma \ref{u1}. The rest follows
from  (a).

(c) This is consequence of the fact that $g^{-1}W\cong_{FN} W^g$
for all $g\in G$.

(d) $V\cong \ind_T^G S$ by (b). This and the irreducibility of $V$
implies that of $S$.
\end{proof}

\begin{theorem}\label{cl2} (Clifford) Let $N\unlhd G$ be groups and
let $W$ be an irreducible $FN$-module with inertia group $T$.
Suppose $S$ is an irreducible $FT$-module lying over $W$. Then $V=\ind_T^G S$
is an irreducible $FG$-module.
\end{theorem}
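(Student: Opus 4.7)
The plan is to restrict $V=\ind_T^G S$ to $N$, read off its $FN$-homogeneous component structure via Mackey and Clifford's first theorem, and then exploit the $FT$-irreducibility of $S$ to show that every nonzero $FG$-submodule $V'$ of $V$ already contains $S$. I begin by applying Theorem~\ref{m1} with $K=N$ and $H=T$. Because $N\unlhd G$ and $N\subseteq T$, the equality $NxT=xT$ holds for every $x\in G$, so the $(N,T)$-double coset representatives can be taken to be a system $D$ of left coset representatives for $T$ in $G$ with $1\in D$; moreover $N\cap xTx^{-1}=xNx^{-1}=N$. Mackey then yields the $FN$-module decomposition
$$
\res_N^G V\;\cong\;\bigoplus_{x\in D} S^x,
$$
which under the standard model $\ind_T^G S=FG\otimes_{FT}S$ corresponds to the decomposition $V=\bigoplus_{x\in D} x\otimes S$ into $FN$-stable subspaces.

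Next I pin down each summand. Applying Theorem~\ref{cl1} to $S$ regarded as an irreducible $FT$-module over $N\unlhd T$ with irreducible $FN$-submodule $W$, the group $T$ transitively permutes the homogeneous components of $\res_N^T S$; since $I_T(W)=T$ the $W$-component is $T$-stable, so it is the only one. Hence $\res_N^T S$ is $W$-homogeneous, in particular completely reducible, and consequently each $S^x$ is $W^x$-homogeneous. As $D$ represents the cosets of $T=I_G(W)$ in $G$, the $FN$-modules $\{W^x:x\in D\}$ are pairwise non-isomorphic, so the subspaces $x\otimes S$ are precisely the distinct homogeneous components of the completely reducible $FN$-module $\res_N^G V$.

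To conclude, let $V'\subseteq V$ be a nonzero $FG$-submodule. As an $FN$-submodule of a completely reducible $FN$-module, $V'$ is itself completely reducible and splits along homogeneous types, giving $V'=\bigoplus_{x\in D}\bigl(V'\cap (x\otimes S)\bigr)$. Now $V'\cap S$ is an $FT$-submodule of the $FT$-irreducible module $S$, hence equals either $S$ or $0$. In the first case $V'\supseteq FG\cdot S=V$ and we are done. In the second, $FG$-invariance of $V'$ combined with the trivial identity $x^{-1}(x\otimes S)=S$ gives $x^{-1}\bigl(V'\cap (x\otimes S)\bigr)\subseteq V'\cap S=0$ for every $x\in D$, forcing each $V'\cap(x\otimes S)=0$ and hence $V'=0$, contradicting $V'\neq 0$.

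The main obstacle in this infinite-dimensional setting is that neither $V$ nor $V'$ need be completely reducible as an $FG$-module; the argument finesses this by relying only on complete reducibility of the $N$-restriction, which Theorem~\ref{cl1}(a) supplies once an irreducible $FN$-submodule $W\hookrightarrow S$ is available. Once that is in hand, the homogeneous-component bookkeeping reduces the irreducibility of $V$ to the assumed $FT$-irreducibility of $S$.
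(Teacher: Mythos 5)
Your proof is correct and takes essentially the same approach as the paper: use Theorem \ref{cl1} to see that $\res_N^T S$ is completely reducible and $W$-homogeneous, so $S$ is precisely the $W$-homogeneous component of $\res_N^G V$, and then use $FT$-irreducibility of $S$ to force any nonzero $FG$-submodule to contain $S$. The paper gets to the key conclusion "any nonzero $FG$-submodule $U$ meets $S$" a bit more briefly, by picking one irreducible $FN$-submodule of $U$ (necessarily $\cong gW$) and translating it by $g^{-1}$ into $S$, whereas you spell out the full Mackey/homogeneous-component bookkeeping and decompose $V'$ across all components before using $FG$-invariance; the underlying idea is the same.
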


\begin{proof} Let $U$ be a non-zero $FG$-submodule of $V$. By Theorem \ref{cl1}, $S$ is
a completely reducible $FN$-module, whence so is $V$ and a
fortiori its $FN$-submodule $U$. In particular, $U$ contains an
irreducible $FN$-submodule, necessarily isomorphic to $gW$ for
some $g\in G$. It follows from Lemma \ref{u1} that $g^{-1}U=U$
contains a $FN$-submodule isomorphic to $W$.

Now, the $W$-homogeneous component of the completely reducible
$FN$-module~$V$ is $S$. By above, $U\cap S\neq 0$. Since $S$ is
$FT$-irreducible, $S$ is contained in $U$, and therefore $U=V$.
\end{proof}

\begin{cor}\label{57} If $T=N$ then $\ind_N^G W$ is an irreducible $FG$-module.
\end{cor}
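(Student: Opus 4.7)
My plan is to derive this as an immediate specialization of Theorem \ref{cl2}. That theorem says: if $W$ is an irreducible $FN$-module with inertia group $T$ and $S$ is any irreducible $FT$-module lying over $W$, then $\ind_T^G S$ is an irreducible $FG$-module. So the only thing to check is that when $T=N$ we may legitimately take $S=W$ in the hypothesis.

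This is straightforward: the phrase ``lying over $W$'' means that $\res_N^T S$ contains an $FN$-submodule isomorphic to $W$. When $T=N$, the restriction is trivial and $S$ itself is an irreducible $FN$-module containing a copy of $W$; irreducibility of $S$ forces $S\cong W$. Hence $\ind_T^G S\cong \ind_N^G W$, and Theorem \ref{cl2} gives the irreducibility of the latter.

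There is no real obstacle; the corollary is just the degenerate case $T=N$ of the preceding theorem, recorded separately because it is the simplest and most frequently cited consequence (and because it highlights the fact that in this case one does not even need the auxiliary module $S$: the induced module $\ind_N^G W$ is already irreducible).
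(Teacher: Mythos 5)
Your proof is correct and matches the paper's intent: the corollary is stated without proof precisely because it is the specialization $T=N$, $S=W$ of Theorem \ref{cl2}, and you have supplied the (routine) verification that $S\cong W$ is the only irreducible $FT$-module lying over $W$ in that case.
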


\begin{theorem} (Clifford) Let $N\unlhd G$ be groups and
let $W$ be an irreducible $FN$-module with inertia subgroup $T$. Then the maps
$$
S\to V(S)=\ind_T^G S,
$$
$$
V\to S(V)= \text{$W$-homogeneous component of }V
$$
yield inverse bijections between the isomorphism classes of
irreducible modules of $FT$ and $FG$ lying over $W$.
\end{theorem}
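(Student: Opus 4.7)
The plan is to verify that both maps are well-defined between the stated sets of isomorphism classes and then check that the two compositions are mutually inverse; the first composition will be almost immediate, so the main work will be identifying $S(V(S))$ with $S$.

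For $S\mapsto V(S)=\ind_T^G S$, irreducibility is exactly Theorem \ref{cl2}, and $V(S)$ lies over $W$ because $S$ embeds as an $FT$-submodule, hence $FN$-submodule, of $\ind_T^G S$ via the canonical inclusion $1\otimes S$, which carries a copy of $W$ with it. For $V\mapsto S(V)$, Theorem \ref{cl1} directly supplies everything needed: $\res_N^G V$ is completely reducible, the $W$-homogeneous component $S(V)$ is nonzero because $V$ lies over $W$, and $S(V)$ is an irreducible $FT$-module lying over $W$. The identity $V(S(V))\cong V$ is then immediate from Theorem \ref{cl1}(d), which asserts $V\cong \ind_T^G S(V)$.

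The main obstacle is to prove $S(V(S))\cong S$ as $FT$-modules. The plan is to compute $\res_N^G \ind_T^G S$ via Mackey decomposition (Theorem \ref{m1}). Since $N\unlhd G$ and $N\subseteq T$, one has $N\cap xTx^{-1}=N$ and $NxT=xT$ for every $x\in G$, so Theorem \ref{m1} yields an $FN$-isomorphism
$$
\res_N^G \ind_T^G S\;\cong\;\bigoplus_{xT\in G/T} S^x.
$$
Applying Theorem \ref{cl1} to the irreducible $FT$-module $S$ with normal subgroup $N$, together with the fact that $T=I_G(W)$ fixes the $W$-homogeneous component of $\res_N^T S$, forces via the transitivity of the $T$-action on homogeneous components that $\res_N^T S$ is itself $W$-homogeneous. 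Hence each summand $S^x$ above is $W^{x^{-1}}$-homogeneous as an $FN$-module, and $W^{x^{-1}}\cong W$ precisely when $x\in T$. Therefore the $W$-homogeneous component of $\res_N^G V(S)$ equals the summand indexed by the trivial coset, realized inside $V(S)$ as the canonical $FT$-submodule $S$, whose inherited $FT$-structure coincides with the original one; this yields $S(V(S))\cong S$ as $FT$-modules, completing the argument.
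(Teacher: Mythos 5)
Your proposal is correct, and its skeleton is the same as the paper's: Theorem \ref{cl2} makes $S\mapsto \ind_T^G S$ land in irreducible $FG$-modules over $W$, Theorem \ref{cl1} handles $V\mapsto S(V)$ and gives $V\cong\ind_T^G S(V)$, so the only real content is $S(\ind_T^G S)\cong S$. The paper treats this last point as already settled, because its proof of Theorem \ref{cl2} contains the assertion that the $W$-homogeneous component of the completely reducible $FN$-module $\ind_T^G S$ is exactly the canonical copy of $S$; you instead reprove it explicitly via Mackey decomposition (Theorem \ref{m1}), using $N\unlhd G$, $N\le T$ to reduce the double cosets to $G/T$, and via the observation that $\res_N^T S$ is $W$-homogeneous because $T=I_G(W)$ stabilizes the $W$-component while acting transitively on the homogeneous components of $\res_N^T S$. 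Your route is slightly longer but has the virtue of making visible exactly why the off-coset summands contribute no copies of $W$ (they are homogeneous of type $W^{x^{-1}}\not\cong W$ for $x\notin T$), a point the paper leaves implicit; conversely, the paper's version shows you could have quoted the homogeneity statement directly from the proof of Theorem \ref{cl2} and dispensed with Theorem \ref{m1} altogether. One small housekeeping item you gesture at but do not carry out: both maps should be checked to be constant on isomorphism classes (an $FG$-isomorphism carries the $W$-homogeneous component to the $W$-homogeneous component, via Lemma \ref{u1}), which is routine.
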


\begin{proof} Immediate consequence of Theorems \ref{cl1} and
\ref{cl2}.
\end{proof}


\begin{note}\label{r1}{\rm It follows from \cite[Lemma 6.1.2]{P} that the above classes are
non-empty.}
\end{note}

\begin{lemma}\label{forma} Let $A$ be an $F$-algebra and let $W$ be an irreducible $A$-module such that
$\End_A(W)=F$. Let $U_1,U_2$ be $F$-vector spaces and view each $U_i\otimes_F W$ as a left $A$-module
via $a(u\otimes w)=u\otimes aw$. Suppose $T\in\Hom_A(U_1\otimes W,U_2\otimes W)$. Then $T=S\otimes 1$,
where $S\in\Hom_F(U_1,U_2)$.
\end{lemma}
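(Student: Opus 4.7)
The plan is to reduce the statement to a standard Schur-lemma application, with the only subtlety being that $U_1,U_2$ may be infinite dimensional, so one must be careful about "direct sum versus direct product" issues. First I would fix an $F$-basis $\{v_j\}_{j\in J}$ of $U_2$. Since $A$ acts only on the second tensor factor, this yields an $A$-module decomposition
$$
U_2\otimes_F W=\underset{j\in J}\bigoplus (v_j\otimes W),
$$
with each summand isomorphic to $W$ as an $A$-module via $v_j\otimes w\mapsto w$. Let $\pi_j:U_2\otimes W\to W$ denote the corresponding $A$-linear projection.

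Next, for each $u\in U_1$ the map $T_u:W\to U_2\otimes W$ defined by $T_u(w)=T(u\otimes w)$ is $A$-linear, so $\pi_j\circ T_u\in\End_A(W)=F$ by hypothesis. Thus there is a unique scalar $s_j(u)\in F$ with $\pi_j(T(u\otimes w))=s_j(u)w$ for every $w\in W$. The key finiteness point: since $T(u\otimes w)$ is a single element of the direct sum $\bigoplus_j (v_j\otimes W)$, its support in $j$ is finite; choosing any $w\neq 0$ (possible since $W$ is irreducible) and using that $s_j(u)w=0$ forces $s_j(u)=0$, we conclude that for each fixed $u$ only finitely many $s_j(u)$ are non-zero. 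Therefore $S(u):=\sum_{j\in J}s_j(u)v_j$ defines an element of $U_2$.

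To see that $S\in\Hom_F(U_1,U_2)$, use the uniqueness of the scalars $s_j(u)$ together with the $F$-bilinearity of the assignment $(u,w)\mapsto T(u\otimes w)$: for $u,u'\in U_1$ and $\lambda\in F$, the identities $T((u+\lambda u')\otimes w)=T(u\otimes w)+\lambda T(u'\otimes w)$, combined with uniqueness of expansion in the basis $\{v_j\}$, give $s_j(u+\lambda u')=s_j(u)+\lambda s_j(u')$ for every $j$, hence $S(u+\lambda u')=S(u)+\lambda S(u')$.

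Finally, by construction
$$
T(u\otimes w)=\underset{j\in J}\sum v_j\otimes s_j(u)w=S(u)\otimes w=(S\otimes 1)(u\otimes w)
$$
for all $u\in U_1$ and $w\in W$, and this equality extends by $F$-linearity to all of $U_1\otimes W$. The main (and essentially only) obstacle is precisely the infinite-dimensionality of $U_2$: one cannot a priori identify $\Hom_A(U_1\otimes W,U_2\otimes W)$ with $\Hom_F(U_1,U_2)\otimes\End_A(W)$, and the finiteness of the support of each image vector in the chosen basis is what rescues the naive finite-dimensional argument.
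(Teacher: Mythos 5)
Your proof is correct, but it takes a genuinely different route from the paper's. You fix an $F$-basis $\{v_j\}$ of $U_2$, decompose $U_2\otimes_F W=\bigoplus_j (v_j\otimes W)$ into copies of $W$, and apply the hypothesis $\End_A(W)=F$ directly to the composites $\pi_j\circ T_u$, with the finite-support observation making $S(u)=\sum_j s_j(u)v_j$ well defined; this is exactly the right way to handle the infinite-dimensionality of $U_2$. The paper instead fixes a single nonzero $w\in W$, writes $T(u\otimes w)$ as a sum $\sum_i u_i\otimes w_i$ with the $w_i$ linearly independent, and invokes the Jacobson density theorem to produce $r\in A$ fixing $w$ and annihilating the other $w_i$, concluding $T(u\otimes w)\in U_2\otimes Fw$; it then has to check separately that the resulting map $S_w$ does not depend on $w$. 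Your argument buys two things: it avoids the density theorem entirely (irreducibility of $W$ is used only to guarantee $W\neq 0$, so the argument really only needs $\End_A(W)=F$), and the scalars $s_j(u)$ are defined independently of $w$ from the start, so the paper's final ``$S_{w_1}=S_{w_2}$'' step disappears. What the paper's approach buys is that it is basis-free in $U_2$, works element by element, and reuses the same density-theorem technique that appears again later (e.g.\ in the proof of Theorem \ref{cl4}(a)), keeping the toolkit uniform.
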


\begin{proof} Fix $0\neq w\in W$. We claim that
there is $S_w\in\Hom_{F}(U_1,U_2)$ such that $T(u\otimes w)=S_w(u)\otimes w$ for all
$u\in U_1$. Indeed, for $u\in U_1$ we have
$$
T(u\otimes w)=u_1\otimes w_1+\cdots+u_n\otimes w_n,
$$
where $w_1,\dots,w_n\in W$ are linearly independent over $F$ and
$u_1,\dots,u_n\in U_2$.

\smallskip

\noindent{\sc Case I.} $w,w_1,\dots,w_n$ are
linearly independent. Since $W$ is $A$-irreducible and $\End_A(W)=F$, the density theorem (cf.
\cite[Theorem 2.1.2]{He}), ensures the existence of $r\in A$ such that
$rw=w$ and $rw_i=0$, $1\leq i\leq n$. Thus
$$
0=r(u_1\otimes w_1+\cdots+u_n\otimes w_n)= rT(u\otimes
w)=T(r(u\otimes w))=T(u\otimes w).
$$
\noindent{\sc Case II.} $w,w_1,\dots,w_n$ are linearly dependent. Since $w\neq 0$, we have
$$
T(u\otimes w)=y_1\otimes z_1+\cdots+y_n\otimes z_n,
$$
where $z_1,\dots,z_n\in W$ are linearly independent, $z_1=w$, and
$y_1,\dots,y_n\in U_2$. As before, we can find $a\in A$ such that
$az_1=z_1$ and $az_i=0$ if $i>1$. Then
$$
y_1\otimes z_1=a(y_1\otimes z_1+\cdots+y_n\otimes
z_n)=T(a(u\otimes w))=y_1\otimes z_1+\cdots+y_n\otimes
z_n,
$$
so
$$
T(u\otimes w)=y_1\otimes z_1=y_1\otimes w.
$$

In either case, $T(u\otimes w)=u'\otimes w$, where $u'\in U_2$ is uniquely determined by $w$. This defines a map
$S_w:U_1\to U_2$ satisfying $T(u\otimes w)=S_w(u)\otimes w$ for all $u\in U_1$. Since $T$ is linear
we readily verify that $S_w\in\Hom_{F}(U_1,U_2)$. This proves the claim.

As $T$ is linear and $\otimes$ is bilinear, we see that $S_{w_1}=S_{w_2}$ for all $w_1,w_2\in W$ different from 0.
Call this common operator $S\in\Hom_{F}(U_1,U_2)$. Then $T(u\otimes w)=S(u)\otimes w$ for all $u\in U_1$, $w\in W$ (including $w=0$,
since $0=0w'$ for any $0\neq w'\in W$), whence $T=S\otimes 1$.
\end{proof}

\begin{theorem}\label{cl3} (Clifford) Let $N\unlhd G$ be groups,
let $S:G\to\GL(V)$ be an irreducible representation such that $V$
has an irreducible $G$-invariant $FN$-submodule~$W$ satisfying
$\End_{FN}(W)=F$. Let $R:N\to\GL(W)$ be the representation of $N$
afforded by $W$, that is, $R(x)=S(x)|_W$ for all $x\in N$. Then
there exist a representation $S':G\to\GL(Z)$ equivalent to $S$,
where $Z=U\otimes_F W$, and irreducible projective representations
$X:G\to\GL(W)$ and $Y:G\to\GL(U)$ such that:

(a) $S'(g)=Y(g)\otimes X(g)$ for all $g\in G$.

(b) $X$ extends $R$.

(c) $Y(x)=1$ for all $x\in N$.
\end{theorem}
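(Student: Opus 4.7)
The plan is to realize $V$, as an $FN$-module, in the form $U \otimes_F W$ with $N$ acting on the second tensorand, and then use Lemma \ref{forma} to factor each $S(g)$ through the two tensorands. By Theorem \ref{cl1}(a) we have $V = \sum_{g \in G} gW$, and the $G$-invariance of $W$ combined with Lemma \ref{u1} gives $gW \cong_{FN} W$ for every $g$, so $\res_N^G V$ is $W$-homogeneous. Since $\End_{FN}(W) = F$, there is an $F$-vector space $U$ and an $FN$-isomorphism $\phi: U \otimes_F W \to V$. Transporting $S$ through $\phi$ yields an equivalent representation $S': G \to \GL(Z)$ on $Z = U \otimes W$, and the task reduces to decomposing $S'$.

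Next I construct the projective extension $X: G \to \GL(W)$ of $R$. For each $g \in G$, the $G$-invariance of $W$ supplies an $FN$-isomorphism $W \to W^g$; I pick such an $X(g)$ with the normalization $X(x) = R(x) = S(x)|_W$ for $x \in N$. This normalization is consistent because $S(x)|_W$ is itself an $FN$-isomorphism $W \to W^x$: for $y \in N$, $S(x) R(y) = R(xyx^{-1}) S(x) = R({}^x y) S(x)$. Since both $X(g_1) X(g_2)$ and $X(g_1 g_2)$ are $FN$-isomorphisms $W \to W^{g_1 g_2}$, they differ by an element of $\End_{FN}(W^{g_1 g_2}) = F$, giving a factor set $\alpha \in Z^2(G, F^*)$ with $X(g_1) X(g_2) = \alpha(g_1, g_2) X(g_1 g_2)$. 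This establishes $X$ as a projective representation extending $R$, which is (b).

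The crucial step applies Lemma \ref{forma}. For $g \in G$ and $x \in N$, $S'(g) S'(x) = S'({}^g x) S'(g)$, so $S'(g): Z \to Z^g$ is $FN$-linear; under the canonical identification $Z^g = U \otimes W^g$, the map $1 \otimes X(g): U \otimes W \to U \otimes W^g$ is also an $FN$-isomorphism, so the composite $(1 \otimes X(g))^{-1} \circ S'(g)$ is an $FN$-endomorphism of $Z$. Lemma \ref{forma}, applied with $A = FN$, produces a unique $Y(g) \in \GL(U)$ with $(1 \otimes X(g))^{-1} S'(g) = Y(g) \otimes 1$, i.e., $S'(g) = Y(g) \otimes X(g)$, which is (a). Specializing to $g = x \in N$, the equality $S'(x) = 1 \otimes R(x) = 1 \otimes X(x)$ together with the uniqueness in Lemma \ref{forma} forces $Y(x) = 1_U$, establishing (c).

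Finally, using $S'(g_1 g_2) = S'(g_1) S'(g_2)$ and $X(g_1) X(g_2) = \alpha(g_1, g_2) X(g_1 g_2)$, one obtains $Y(g_1) Y(g_2) = \alpha(g_1, g_2)^{-1} Y(g_1 g_2)$, so $Y$ is a projective representation with the inverse cocycle. Projective irreducibility of $X$ follows from the $FN$-irreducibility of $W$; for $Y$, any proper nontrivial $Y(G)$-stable subspace $U_0 \le U$ would yield a proper nontrivial $S'(G)$-stable subspace $U_0 \otimes W \le Z$, which is impossible by the $FG$-irreducibility of $V$. The main technical hurdle is packaged into Lemma \ref{forma}, whose infinite-dimensional density-theorem argument is what extracts the tensor-product factorization; once that is in hand, the rest is a careful bookkeeping exercise with cocycles and endomorphism rings.
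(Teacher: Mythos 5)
Your proof is correct and follows essentially the same route as the paper: realize $V\cong U\otimes_F W$ as an $FN$-module with $N$ acting on the second factor, choose a projective $X$ extending $R$ via $G$-invariance, and apply Lemma \ref{forma} to the $FN$-endomorphism obtained by cancelling $1\otimes X(g)$ from $S'(g)$ to produce $Y(g)\otimes X(g)$. The only differences are cosmetic: you spell out the cocycle construction of $X$ and the irreducibility of $X$ and $Y$, which the paper asserts more briefly.
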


\begin{proof} Since $W$ is $FN$-irreducible, Theorem \ref{cl1} ensures that the $FN$-module $V$
is the direct sum of $|I|$ copies of $W$ for some set $I$. Let $U$
be an $F$-vector space of dimension $|I|$ and set $Z=U\otimes_F
W$. Then $x\mapsto 1\otimes R(x)$ is a representation of $N$ on
$Z$. By construction, $Z\cong V$ as $FN$-modules. Since the action
of $N$ on $V$ can be extended to $G$, so does the action of $N$ on
$Z$. Thus, there is a representation $S':G\to\GL(Z)$ equivalent to
$S$ such that $S'(x)=1\otimes R(x)$, $x\in N$.

Since $W$ is $G$-invariant, there is a projective representation
$X:G\to\GL(W)$ extending $R$ such that
$$
X(g)R(x)X(g)^{-1}=R(gxg^{-1}),\quad g\in G,x\in N.
$$
On the other hand, since $S'$ is a representation,
$$
S'(g)(1\otimes R(x))S'(g)^{-1}=1\otimes R(gxg^{-1}), \quad g\in G,x\in N.
$$
For $g\in G$ let $P(g)=S'(g)(1\otimes X(g))^{-1}$. Then
$$
P(g)(1\otimes R(x))P(g)^{-1}=1\otimes R(x), \quad g\in G,x\in N.
$$
Since $\End_{FN}(W)=F$, Lemma \ref{forma} ensures that, given any $g\in G$, there exists $Y(g)\in\GL(U)$
such that
$$
P(g)=Y(g)\otimes 1,
$$
that is,
\begin{equation}
\label{syx}
S'(g)=Y(g)\otimes X(g).
\end{equation}

Since $X$ extends $R$, it follows that $P(x)=1$, and hence
$Y(x)=1$, for all $x\in N$. Moreover, since $S'$ is a
representation and $X$ is a projective representation, then
(\ref{syx}) forces $Y$ to be a projective representation as well.
Furthermore, since $S'$ is irreducible, so must be $X$ and $Y$.
\end{proof}

\begin{note}\label{gal0}{\rm It is easy to see (cf. \cite[\S 4]{C}) that $X$ can be chosen
so that its associated factor set $\alpha\in Z^2(G,F^*)$ satisfies
\begin{equation}
\label{fact} \alpha(g_1x_1,g_2x_2)=\alpha(g_1,g_2),\quad
g_1,g_2\in G, x_1,x_2\in N.
\end{equation}
}
\end{note}

\begin{theorem}\label{cl4} (Clifford) Let $N\unlhd G$ be groups,
let $R:N\to\GL(W)$ be an irreducible representation that is
$G$-invariant and satisfies $\End_{FN}(W)=F$, and let
$X:G\to\GL(W)$ be a projective representation extending $R$ with
factor set $\alpha\in Z^2(G,F^*)$ satisfying (\ref{fact}). Then

(a) There exists an irreducible projective representation $Y:G\to\GL(U)$ trivial on $N$ with factor set
$\beta=\alpha^{-1}$. Moreover, for any such $Y$, if we set $Z=U\otimes_F W$ and define $S:G\to\GL(Z)$ by
$S(g)=Y(g)\otimes X(g)$, $g\in G$, then $S$ is an irreducible representation of $G$ and
$\res_N^G Z$ is the direct sum of $\dm_F(U)$ copies of $W$.

(b) For $1\leq i\leq 2$, let $Y_i:G\to\GL(U_i)$ be an irreducible
projective representation trivial on $N$ with factor set $\beta$
and let $S_i:G\to\GL(Z_i)$, $Z_i=U_i\otimes_F W$, be defined by
$S_i(g)=Y_i(g)\otimes X(g)$. Suppose $S_1$ and $S_2$ are
equivalent. Then $Y_1$ and $Y_2$ are strictly equivalent, in the sense that
there is an $F$-linear isomorphism $f:U_1\to U_2$ such that
$f(Y_1(g)u)=Y_2(g)f(u)$ for all $g\in G$ and $u\in U_1$.
\end{theorem}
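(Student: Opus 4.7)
The plan for (a) is to first produce a particular compatible pair $(X_0,Y_0)$ from the existing theory and then twist $Y_0$ by a scalar function to align it with the given $X$. Since $W$ is $G$-invariant its inertia group $T$ equals $G$, so Note \ref{r1} (via \cite[Lemma 6.1.2]{P}) guarantees an irreducible $FG$-module $V$ containing (a copy of) $W$ as an $FN$-submodule. Applying Theorem \ref{cl3} to $V$ yields irreducible projective representations $X_0:G\to\GL(W)$ extending $R$ with some factor set $\alpha_0$, and $Y_0:G\to\GL(U)$ trivial on $N$ with factor set $\alpha_0^{-1}$, together with a genuine representation $g\mapsto Y_0(g)\otimes X_0(g)$ on $U\otimes_F W$.

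Next I adjust $(X_0,Y_0)$ to match the given $X$. Both $X(g)$ and $X_0(g)$ conjugate $R(x)$ to $R(gxg^{-1})$ for $x\in N$, so $X(g)X_0(g)^{-1}$ commutes with $R(N)$; since $\End_{FN}(W)=F$, this operator is a scalar, yielding $\lambda:G\to F^*$ with $X(g)=\lambda(g)X_0(g)$, and $\lambda|_N\equiv 1$ because $X(x)=R(x)=X_0(x)$ for $x\in N$. A direct coboundary computation gives $\alpha=(\partial\lambda)\cdot\alpha_0$, where $(\partial\lambda)(g_1,g_2)=\lambda(g_1)\lambda(g_2)\lambda(g_1g_2)^{-1}$, so setting $Y(g):=\lambda(g)^{-1}Y_0(g)$ produces an irreducible projective representation trivial on $N$ with factor set $\alpha^{-1}=\beta$.

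Now fix any $Y$ as in the statement and set $S=Y\otimes X$. The factor sets cancel so $S$ is a genuine representation of $G$, and $S(x)=1\otimes R(x)$ for $x\in N$ exhibits $\res_N^G Z$ as $\dm_F(U)$ copies of $W$. For irreducibility, let $M\subseteq Z$ be a nonzero $FG$-submodule; since $\res_N^G Z$ is $W$-homogeneous with $\End_{FN}(W)=F$, $M$ is an $FN$-direct summand and there is an $FN$-linear idempotent $\pi:Z\to Z$ with image $M$. By Lemma \ref{forma}, $\pi=S_\pi\otimes 1$ for some idempotent $S_\pi\in\End_F(U)$, so $M=U_0\otimes W$ with $U_0:=S_\pi(U)\neq 0$. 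The $FG$-stability of $M$ combined with the invertibility of $X(g)$ on $W$ then forces $Y(g)U_0\subseteq U_0$ for every $g\in G$, and irreducibility of $Y$ gives $U_0=U$, so $M=Z$.

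For (b), an $FG$-isomorphism $f:Z_1\to Z_2$ is in particular $FN$-linear between $W$-homogeneous modules, so Lemma \ref{forma} yields $f=S\otimes 1$ with $S\in\Hom_F(U_1,U_2)$ necessarily a linear isomorphism. Unpacking $f\circ S_1(g)=S_2(g)\circ f$ on pure tensors $u\otimes w$ and using that the vectors $X(g)w$ span $W$ produces $SY_1(g)=Y_2(g)S$ for every $g\in G$, the required strict equivalence. The main obstacle is the very first step: existence of the initial pair $(X_0,Y_0)$ ultimately rests on the non-constructive Note \ref{r1} (which invokes \cite[Lemma 6.1.2]{P}); the remainder is routine cocycle bookkeeping together with repeated application of Lemma \ref{forma}.
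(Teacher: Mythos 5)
Your proof is correct, but it reaches the two key points of part (a) by a different route than the paper. For the existence of $Y$, the paper argues directly: condition (\ref{fact}) lets $\alpha^{-1}$ descend to a factor set $\delta\in Z^2(G/N,F^*)$, and Zorn's lemma produces a maximal left ideal of the twisted group algebra $F^{\delta}[G/N]$, whose quotient is an irreducible module $U$; inflating gives $Y$. You instead bootstrap from earlier results: Note \ref{r1} supplies an irreducible $FG$-module lying over $W$, Theorem \ref{cl3} decomposes it as $Y_0\otimes X_0$, and the observation that any two projective extensions of $R$ differ by a scalar function $\lambda$ (trivial on $N$, since $\End_{FN}(W)=F$) lets you twist $Y_0$ to match the prescribed factor set $\beta=\alpha^{-1}$. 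Both are legitimate; the paper's construction is self-contained and exhibits $Y$ explicitly as an $F^{\delta}[G/N]$-module (the classical picture behind the second Clifford correspondence), while yours trades that for dependence on Note \ref{r1} and Theorem \ref{cl3} but isolates the useful fact that projective extensions of $R$ are unique up to a scalar function, with the factor set changing by the coboundary $\partial\lambda$. (Do note that your step ``$X(g)$ conjugates $R(x)$ to $R(gxg^{-1})$'' for the given $X$ is where (\ref{fact}) enters, via $\alpha(g,x)=\alpha(g,1)=1=\alpha(1,g)=\alpha(gxg^{-1},g)$, using $X(1)=R(1)=1$; this is true but worth spelling out, and the identification of the abstract copy of $W$ inside $V$ with $W$ itself is a routine transport along an $FN$-isomorphism.) For irreducibility of $S$, the paper shows every nonzero vector is cyclic by a density-theorem argument, whereas you show every nonzero $FG$-submodule $M$ is all of $Z$ by taking an $FN$-linear projection onto $M$ (available since $\res_N^G Z$ is completely reducible), applying Lemma \ref{forma} to write it as $S_\pi\otimes 1$, and then using irreducibility of $Y$; this is a clean reuse of Lemma \ref{forma} and ultimately rests on the same density theorem. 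Part (b) is handled exactly as in the paper, via Lemma \ref{forma} with $A=FN$, with your extra details (bijectivity of $S$, cancelling the invertible $X(g)$) filling in what the paper leaves implicit.
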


\begin{proof} (a) It follows from (\ref{fact}) that $\alpha$ gives rise to a factor set $\gamma\in Z^2(G/N,F^*)$.
By Zorn's lemma, the twisted group algebra $F^{\delta}[G/N]$,
$\delta=\gamma^{-1}$, has a maximal left ideal. This proves the
existence of $Y$. For any such $Y$, it is clear that $S$ is a
representation of $G$. Let us verify that $S$ is irreducible.

Let $0\neq v\in V$. Then $v=u_1\otimes w_1+\cdots+u_n\otimes w_n$,
where $w_1,\dots,w_n\in W$ are linearly independent over $F$ and
$u_1,\dots,u_n\in U$ are non-zero. Since $W$ is $FN$-irreducible
and $\End_{FN}(W)=F$, the density theorem (cf. \cite[Theorem
2.1.2]{He}) ensures the existence of $r\in FN$ such that
$rw_1=w_1$ and $rw_i=0$ for $i>1$. {\em Since $N$ acts trivially
on $U$}, it follows that
$$
rv=u_1\otimes w_1.
$$
As $W$ is $FN$-irreducible and $N$ acts trivially on $U$, it
follows that $u_1\otimes W$ is contained in $FG\cdot v$. Let $u\in
U$ be arbitrary. Since $U$ is $F^{\beta}[G]$-irreducible,
$u=su_1$, where $s=\alpha_1g_1+\cdots+\alpha_m g_m\in
F^{\beta}[G]$. Every $F$-subspace $\alpha_i g_i(u_1\otimes
W)=\alpha_ig_iu_1\otimes W$ is contained in $FG\cdot v$, and
therefore so is $u\otimes W$. Since $u$ is arbitrary, we infer
$FG\cdot v=V$. This proves that $S$ is irreducible. By
construction, $\res_N^G Z$ is the direct sum of $\dm_F(U)$ copies
of $W$.

(b) This follows from Lemma \ref{forma} applied to $A=FN$.
\end{proof}

\begin{theorem} (Gallagher) Let $N\unlhd G$ be groups and let $R:N\to\GL(W)$ be an irreducible representation
of $N$ over $F$ satisfying:

$\bullet$ $\End_{FN}(W)=F$.

$\bullet$ $R$ extends to a representation $S:G\to\GL(W)$.

Then

(a) Every irreducible $FG$-module lying over $W$ is isomorphic to
$U\otimes_F W$, where $U$ is an irreducible $FG$-module acted upon
trivially by $N$.

(b) If $U$ is an irreducible $FG$-module acted upon trivially by
$N$, then $V=U\otimes_F W$ is an irreducible $FG$-module.

(c) If $U_1$ and $U_2$ are irreducible $FG$-modules acted upon
trivially by $N$ and $U_1\otimes_F W\cong U_2\otimes_F W$ as
$FG$-modules, then $U_1\cong U_2$ as $FG$-modules.
\end{theorem}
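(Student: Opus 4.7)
The plan is to deduce all three parts directly from Theorems \ref{cl3} and \ref{cl4}, leveraging the hypothesis that $R$ admits an honest extension $S$ (not merely a projective one). The key observation is that $S$ is itself a projective representation extending $R$ whose factor set $\alpha$ is identically $1$; when we invoke those theorems with $X := S$, the corresponding $\beta = \alpha^{-1}$ is also trivial, so the projective representations $Y: G \to \GL(U)$ that appear there are in fact ordinary representations of $G$. Note also that $W$ is automatically $G$-invariant, since for each $g \in G$ the operator $S(g)$ is an $FN$-isomorphism from $W^g$ to $W$ by multiplicativity of $S$.

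For (a), let $V$ be an irreducible $FG$-module lying over $W$. By definition $V$ contains an $FN$-submodule $W'$ admitting an $FN$-isomorphism $\phi: W \to W'$, and transport yields an extension $S' := \phi \circ S \circ \phi^{-1}$ of the $N$-action on $W'$ to all of $G$. Apply Theorem \ref{cl3} with $X := S'$: it produces an $FG$-isomorphism $V \cong U \otimes_F W'$ on which $g$ acts as $Y(g) \otimes S'(g)$, where $Y: G \to \GL(U)$ is irreducible projective with factor set $\beta = 1$ and $Y|_N = 1$. A trivial factor set means $Y$ is an ordinary representation, so $U$ becomes an irreducible $FG$-module with trivial $N$-action. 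Pulling back along $\mathrm{id}_U \otimes \phi^{-1}$ then gives the desired $FG$-isomorphism $V \cong U \otimes_F W$, with diagonal action $g(u \otimes w) = Y(g)u \otimes S(g)w$.

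For (b), let $U$ be an irreducible $FG$-module with trivial $N$-action, viewed as a projective representation $Y: G \to \GL(U)$ with trivial factor set. Apply Theorem \ref{cl4}(a) with $X := S$ and this $Y$: both factor sets are trivial and $Y|_N = 1$, so the theorem directly asserts that $g \mapsto Y(g) \otimes S(g)$ is an irreducible representation of $G$ on $U \otimes_F W$, which is precisely the claimed $FG$-module structure. For (c), given an $FG$-isomorphism $U_1 \otimes_F W \cong U_2 \otimes_F W$, apply Theorem \ref{cl4}(b) with $X := S$ and the $Y_i$ the representations on $U_i$ (trivial factor sets, trivial on $N$): the strict equivalence $f(Y_1(g)u) = Y_2(g)f(u)$ it furnishes is exactly an $FG$-module isomorphism $U_1 \cong U_2$.

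There is no serious obstacle; the theorem is essentially a dictionary translating the projective Clifford theorems into the non-projective setting, with the sole substantive observation being that a projective representation with trivial factor set is the same object as an ordinary representation. The only mildly delicate bookkeeping is the transport of $S$ along $\phi$ in part (a) to match the $FN$-submodule of $V$ actually isomorphic to $W$; this is a routine computation.
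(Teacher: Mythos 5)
Your proof is correct and takes essentially the same route as the paper, which simply cites Theorems \ref{cl3} and \ref{cl4}; you have usefully made explicit the key point that a trivial factor set turns the projective representations $Y$, $Y_i$ into ordinary $FG$-modules. One small caution: the statement of Theorem~\ref{cl3} asserts the \emph{existence} of some projective extension $X$ rather than letting you prescribe $X := S'$, so to justify part (a) literally you should either note that the proof of Theorem~\ref{cl3} works with any chosen projective extension of $R$ (in particular $S'$), or else take whatever $X$ the theorem produces and renormalize $Y$ by the 1-cochain $c$ with $X = cS'$ to trivialize its factor set — either fix is routine and does not change the substance of your argument.
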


\begin{proof} (a) follows from Theorem \ref{cl3}, while (b) and (c) from Theorem
\ref{cl4}.
\end{proof}

\begin{theorem} Let $G_1$ and $G_2$ are groups and set $G=G_1\times G_2$.

(a) Suppose that $V_1$ and $V_2$ are irreducible modules of $FG_1$
and $FG_2$, respectively, such that $End_{FG_2}(V_2)=F$. Then $V=V_1\otimes_F V_2$ is an irreducible $FG$-module.

(b) Let $V$ be an irreducible $FG$-module with an
irreducible $FG_2$-submodule $V_2$ such that $\End_{FG_2}(V_2)=F$. Then
there is an irreducible $FG_1$-module $V_1$ such that $V\cong
V_1\otimes_F V_2$ as $FG$-modules.

(c)  Let $V_1,W_1$ be irreducible $FG_1$-modules,
let $V_2,W_2$ be irreducible $FG_2$-modules, and suppose $V_1\otimes_F
V_2\cong W_1\otimes_F W_2$ as $FG$-modules. Then $V_1\cong W_1$
and $V_2\cong W_2$.
\end{theorem}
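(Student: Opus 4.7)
The plan is to reduce all three parts to Gallagher's theorem by taking $N=\{1\}\times G_2$ as the normal subgroup of $G=G_1\times G_2$. Under the canonical identifications $N\cong G_2$ and $G/N\cong G_1$, every $FG_2$-module becomes an $FN$-module, every irreducible $FG_1$-module inflates to an irreducible $FG$-module on which $N$ acts trivially (this inflation being a bijection onto such modules), and the external tensor product $V_1\otimes_F V_2$ thereby acquires the diagonal $G$-action.

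Two facts make Gallagher's theorem immediately applicable in this setup. First, every $FN$-module $V_2$ is automatically $G$-invariant: since $G_1$ centralizes $N$, conjugation by any $g\in G$ acts trivially on $N$, so the twisted module $V_2^g$ of Lemma~\ref{u1} coincides with $V_2$ and thus $I_G(V_2)=G$. Second, the representation $R:N\to\GL(V_2)$ extends to all of $G$ simply by letting $G_1$ act as the identity, so the extension hypothesis required by Gallagher is for free.

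Parts (a) and (b) then follow directly. For (a), $\End_{FG_2}(V_2)=F$ is the same as $\End_{FN}(V_2)=F$, and $V_1$ inflated to $FG$ is an irreducible $FG$-module on which $N$ acts trivially; Gallagher's part (b) yields irreducibility of $V_1\otimes_F V_2$. For (b), $V_2\subseteq V$ is automatically an $FN$-submodule, so $V$ lies over $V_2$, and Gallagher's part (a) gives $V\cong U\otimes_F V_2$ for some irreducible $FG$-module $U$ with $N$ acting trivially; writing $U$ as the inflation of a necessarily irreducible $FG_1$-module $V_1$ completes this part.

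Part (c) is the only step requiring separate treatment, because no $\End_{FG_2}(V_2)=F$ hypothesis is assumed and so Gallagher's part (c) cannot be invoked. A self-contained homogeneous-component argument works instead. Restricting the given isomorphism $V_1\otimes_F V_2\cong W_1\otimes_F W_2$ to $FN$ produces an isomorphism of semisimple $FN$-modules between a direct sum of $\dm_F V_1$ copies of $V_2$ and a direct sum of $\dm_F W_1$ copies of $W_2$; if $V_2\not\cong W_2$, then the $V_2$-homogeneous component is the whole module on the left and zero on the right, a contradiction, forcing $V_2\cong W_2$. A symmetric restriction to $F(G_1\times\{1\})$ yields $V_1\cong W_1$. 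The only subtlety in the whole argument is this need to handle (c) outside Gallagher's umbrella because of the missing endomorphism hypothesis; the rest is essentially a dictionary translation into the direct-product setting.
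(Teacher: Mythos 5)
Your proposal is correct and takes essentially the same route as the paper: parts (a) and (b) go through Gallagher's theorem, which the paper itself derives directly from Theorems~\ref{cl3} and~\ref{cl4} (the results the paper invokes in place of Gallagher), and your homogeneous-component argument for (c) is exactly the paper's. One small note: the paper's proof of (a) says ``applied to $N=G_1$,'' which given the hypothesis $\End_{FG_2}(V_2)=F$ appears to be a slip for $N=G_2$, and your choice of $N=\{1\}\times G_2$ is the correct one.
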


\begin{proof} (a) This follows from Theorem \ref{cl4} applied to $N=G_1$ and $\alpha=1$ .

(b) Immediate consequence of Theorem \ref{cl3}.

(c) $V_1\otimes V_2$ (resp. $W_1\otimes W_2$) is a completely reducible $FG_1$-module with
all irreducible submodules isomorphic to $V_1$ (resp. $W_1$). Since $V_1\otimes
V_2\cong W_1\otimes W_2$ as $FG_1$-modules, we infer $V_1\cong W_1$. Using $G_2$ instead of $G_1$,
we obtain $V_2\cong W_2$.
\end{proof}

\begin{note}{\rm The condition $\End_{FG_2}(V_2)=F$ is essential in parts (a) and (b).
Indeed, let $H$ stand for the real quaternions and set $G_1=G_2=\{\pm 1,\pm i,\pm j,\pm k\}$.
Then $H$ becomes an irreducible module for $G=G_1\times G_2$ over $\R$ by declaring
$$
(x,y)h=xhy^{-1},\quad x\in G_1, y\in G_2, h\in H.
$$
But $\res_{G_1}^G H$ and $\res_{G_2}^G H$ remain irreducible, so $H$ cannot be a tensor product. Moreover,
$V=H\otimes_\R H$ is not irreducible, since $\End_{FG}(V)\cong H\otimes_\R H^{\mathrm{op}}\cong M_4(\R)$ is not a division
ring.
}
\end{note}

\begin{exa} Let $V$ be an infinite dimensional $F$-vector space
of countable dimension, let $G_1=\GL(V)=G_2$ and set $G=G_1\times
G_2$. Then $\End(V)$ is an $FG$-module via $(x,y)f=xfy^{-1}$; the
endomorphisms of $V$ of finite rank form an irreducible
$FG$-submodule $U$ of $\End(V)$; $V$ is an irreducible
$FG_1$-submodule of $U$ satisfying $\End_{FG_1}(V)=F$; $V^*$ is an
irreducible $FG_2$-submodule of $U$; $U\cong V\otimes V^*$ as
$FG$-modules.
\end{exa}

\section{Irreducible $FG$-modules without irreducible $FN$-submodules}\label{sec:ex1}




\begin{lemma}\label{art} Let $R$ be a simple ring with a minimal left ideal $I$. Then $R$ is artinian.
\end{lemma}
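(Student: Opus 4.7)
The plan is to show that $R$ is a finite sum of minimal left ideals, hence of finite composition length as a left module over itself, and therefore artinian.

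First, I would form the two-sided ideal $J=\sum_{r\in R}Ir$. It is clearly a left ideal (being a sum of left ideals) and it is closed under right multiplication by any $s\in R$ since $(Ir)s=I(rs)\subseteq J$, so it is a two-sided ideal. Because $I\subseteq J$, it is nonzero, and simplicity of $R$ forces $J=R$.

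Next, I would observe that for each $r\in R$ the map $I\to Ir$, $x\mapsto xr$, is a surjective homomorphism of left $R$-modules whose kernel is a submodule of the simple module $I$. Hence $Ir$ is either $0$ or isomorphic to $I$, and in particular each nonzero $Ir$ is itself a minimal left ideal. So $R$ is a (possibly infinite) sum of minimal left ideals, i.e., $R$ is semisimple as a left module over itself.

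The key step—and the reason the hypothesis of a unit matters—is to cut this down to a finite sum. Writing $1=x_1+\cdots+x_n$ with each $x_i\in Ir_i$, we get
$$
R=R\cdot 1\subseteq Ir_1+\cdots+Ir_n\subseteq R,
$$
so $R=Ir_1+\cdots+Ir_n$ is a sum of finitely many minimal left ideals. A standard induction (discarding redundant summands) refines this sum to a direct one, giving a composition series of length at most $n$ for ${}_RR$. Since a module of finite length is both artinian and noetherian, $R$ is artinian as a left module over itself, i.e., $R$ is a left artinian ring.

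The only real obstacle is the passage from the infinite sum $\sum_{r\in R}Ir=R$ to a finite subsum, and this is precisely where the presence of the unit $1$ (assumed throughout the paper for all rings) is used.
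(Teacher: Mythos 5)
Your proof is correct and takes essentially the same route as the paper: simplicity gives $R=IR$, the presence of the unit $1$ pins this down to a finite expression, and from that one reads off that ${}_RR$ has finite composition length. The paper is a touch more direct — it writes $1=x_1y_1+\cdots+x_ny_n$ with $x_i\in I$, $y_i\in R$, and immediately produces the $R$-module surjection $I^n\to R$, $(z_1,\dots,z_n)\mapsto z_1y_1+\cdots+z_ny_n$, so finite length follows without the intermediate observations that each $Ir$ is minimal or zero and that the sum can be refined to a direct one. Your extra steps (establishing semisimplicity of ${}_RR$) are valid but not needed for the artinian conclusion.
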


\begin{proof} The simplicity of $R$ implies $R=IR$, whence $1=x_1y_1+\cdots+x_n y_n$, where $x_i\in I$ and $y_i\in R$.
Thus the map $I^n\to R$, given by $(z_1,\dots,z_n)\mapsto
z_1y_1+\cdots+z_n y_n$, is an epimorphism of $R$-modules. Since
$I$ is an irreducible $R$-module, $R$ has a finite composition
series as $R$-module and is therefore artinian.
\end{proof}

\begin{prop} Let $R$ be a simple non-artinian ring with center $F$ and unit group~$U$. Assume $R$ is $F$-spanned by $U$. Set
$G=G_1\times G_2$, where $G_1=U=G_2$, and view $V=R$ as an
$FG$-module via
$$
(x,y)v=xvy^{-1},\quad x\in G_1, y\in G_2, v\in V.
$$
Then

(a) $V$ is an irreducible $FG$-module.

(b) $V$ has no irreducible $FG_1$-submodules.

(c) $End_{FG}(V)=F$.

(d) $V\not\cong V_1\otimes_F V_2$ for any $FG_i$-modules $V_i$,
$1\leq i\leq 2$.
\end{prop}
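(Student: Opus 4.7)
The plan is to exploit the hypothesis $R=FU$ to translate the $FG$-module structure on $V=R$ into the usual ideal structure on $R$, and then read off all four statements from the fact that $R$ is simple and non-artinian with centre $F$.

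First, for (a), I observe that an $F$-subspace $W\subseteq R$ is an $FG$-submodule precisely when it is stable under $xW$ for all $x\in U$ and under $Wy^{-1}$ for all $y\in U$; since $y\mapsto y^{-1}$ permutes $U$, both conditions together say $W$ is stable under left and right multiplication by $U$. Because $R$ is $F$-spanned by $U$, this is the same as $W$ being stable under $R$ on both sides, i.e.\ $W$ is a two-sided ideal. The simplicity of $R$ then gives $W\in\{0,R\}$, proving irreducibility.

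For (b), the same translation shows that an $FG_1$-submodule of $V$ is an $F$-subspace stable under left multiplication by $U$, hence by $R=FU$, i.e.\ a left ideal of $R$. An irreducible $FG_1$-submodule would therefore be a minimal left ideal of $R$, forcing $R$ to be artinian by Lemma \ref{art}; but $R$ is assumed non-artinian, so no such submodule exists. For (c), if $f\in\End_{FG}(V)$, then $f$ commutes with all left multiplications by $U$, hence by $R$; writing $a=f(1)$ gives $f(v)=va$ for all $v\in R$. Commuting with the $G_2$-action yields $vy^{-1}a=vay^{-1}$ for all $v\in R$, $y\in U$, and specializing $v=1$ shows $a$ commutes with every $y\in U$, hence with $R=FU$. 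Thus $a\in Z(R)=F$, and conversely every scalar works, so $\End_{FG}(V)=F$.

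Finally, (d) follows formally from (a) and (b). Suppose $V\cong V_1\otimes_F V_2$ as $FG$-modules with each $V_i$ nonzero (otherwise the tensor product is zero). If $V_1$ had a proper nonzero $FG_1$-submodule $V_1'$, then $V_1'\otimes_F V_2$ would be a proper nonzero $FG$-submodule of $V_1\otimes_F V_2$, contradicting (a). Hence $V_1$ is irreducible as $FG_1$-module. Picking any $0\neq v_2\in V_2$, the subspace $V_1\otimes Fv_2\subseteq V_1\otimes V_2\cong V$ is then an irreducible $FG_1$-submodule of $V$, contradicting (b).

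The only genuine point is the identification in (a) and (b) of $FG$- and $FG_1$-submodules with two-sided and left ideals respectively; everything else is bookkeeping. I do not anticipate a real obstacle, though I should be careful in (c) to justify $va=av$ when writing down the endomorphism explicitly, rather than conflating left and right multiplication.
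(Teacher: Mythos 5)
Your proof is correct and fills in, with the right level of detail, exactly the argument the paper sketches: the identification of $FG$-submodules (resp.\ $FG_1$-submodules) of $V=R$ with two-sided (resp.\ left) ideals, the appeal to simplicity and to Lemma~\ref{art}, the reduction of an $FG$-endomorphism to right multiplication by a central element, and, for (d), the observation that a tensor factorization would force $V_1$ to be $FG_1$-irreducible and hence embed an irreducible $FG_1$-submodule into $V$, contradicting (b). This is the same approach as the paper's, just written out in full.
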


\begin{proof} (a) Use that $R$ is simple and
$F$-spanned by $U$. (b) Use Lemma \ref{art} and that $R$ is
$F$-spanned by $U$ and non-artinian. (c) Use that $R$ is
$F$-spanned by $U$. (d) Suppose $V\cong V_1\otimes_F V_2$ for some
$FG_i$-modules $V_i$, $1\leq i\leq 2$. Since $V$ is
$FG$-irreducible, $V_1$ is $FG_1$-irreducible. Thus $V$ contains
an irreducible $FG_1$-submodule, namely a copy of $V_1$, a
contradiction.
\end{proof}


Rings $R$ satisfying the above conditions can be found
at end of \cite[Ch. 1]{L} based on a given field $K$.
For instance, let $R_i=M_{2^i}(K)$, $i\geq 0$, naturally viewed as a
subring of $R_{i+1}$, and set $R=\underset{i\geq 0}\cup R_i$.
Also, let $V$ be a $K$-vector space of countable dimension, let $I$ be the ideal of $\End(V)$
of all endomorphisms of finite rank and set $R=\End(V)/I$ (see also \cite{Z}). For another example,
suppose $K$ admits an automorphism $\sigma$ of infinite order, and set
$R=K[t,t^{-1};\sigma]$, a skew Laurent polynomial ring.

\section{Examples with $\ind_N^G W$ irreducible and $I_G(W)=G$}\label{sec:ex2}



\begin{exa}\label{inva} Let $F$ be a field with an irreducible
polynomial $f(t)=t^m-a\in F[t]$ of degree $m>1$. Let $n$ be the
order, possibly infinite, of $a\in F^*$ and let $G=\langle
x\rangle$ be a cyclic group of order $mn$ (possibly infinite). Let
$N=\langle x^m\rangle$ and view $W=Fw$ as an $FN$-module via
$x^m\cdot w=aw$. Then $N$ is a proper normal subgroup of $G$, $W$
is a $G$-invariant irreducible $FN$-module, and $V=\ind_N^G W$ is
an irreducible $FG$-module.
\end{exa}

\begin{proof} The matrix of $x$ acting on $V$ relative to the
$F$-basis $w,xw,\dots,x^{m-1}w$ is the companion matrix $C$ of
$f$. Since $f$ is an irreducible polynomial over $F$, it follows
that $V$ is an irreducible $FG$-module.
\end{proof}

$\bullet$ Clearly $\End_{FN}(W)=F$. On the other hand, since the
centralizer of the cyclic matrix $C$ in $M_m(F)$ is $F[C]$, it
follows that $\End_{FG}(V)\cong F[C]\cong F[t]/(f)$, which is a
field extension of $F$ of degree $m=[G:N]$.

$\bullet$ $G$ is not a split extension of $N$. This is obvious if
$G$ is infinite. If $n$ is finite then $\gcd(m,n)>1$, for
otherwise $a$ would be an $m$th power in $F^*$, contradicting the
irreducibility of $f$.

$\bullet$ The action of $N$ on $W$ does not extend to $G$, as this
would violate the irreducibility of $f$.

$\bullet$ Example \ref{inva} applies to any field $F$ admitting a
radical extension. This is fairly wide class of rings, as
evidenced by Capelli's theorem (cf. \cite[Theorem 9.1]{La}).
Naturally, algebraically closed fields are excluded. The only
non-allowed finite field is $F_2$.

\begin{exa}\label{inva2} Let $K$ be a field, let $\Gamma$ be a
subgroup of $\Aut(K)$ and let $F$ be a subfield of the fixed field
of $\Gamma$. Given $f\in Z^2(\Gamma,K^*)$, let $G$ be the
extension of $N=K^*$ by $\Gamma$ determined by the factor set $f$.
Then $W=K$ is a $G$-invariant irreducible $FN$-module, and
$V=\ind_N^G W$ is an irreducible $FG$-module if and only if the
crossed product $(K,\Gamma, f)$ is a division ring. In any case,
$\End_{FN}(V)\cong (K,\Gamma, f)$.
\end{exa}

\begin{proof} Every element of $G$ has the form $x_\sigma k$ for
unique $\sigma\in \Gamma$ and $k\in K^*$, with multiplication
$$
x_\sigma k\cdot x_\tau \ell=x_{\sigma\tau}(f(\sigma,\tau) k^\tau
\ell).
$$
On the other hand, every element of $V$ has form
$\underset{\sigma\in \Gamma}\sum x_\sigma k_\sigma$ for unique
$k_\sigma\in K$, almost all zero. The $F$-linear action of $G$ on
$V$ coincides with the multiplication in~$V$, viewed as a crossed
product algebra. Thus $\End_{FN}(V)\cong (K,\Gamma, f)$ and the
$FG$-module $V$ is irreducible if and only if  $(K,\Gamma, f)$ has
no proper left ideals but zero.

For $\sigma\in\Gamma$, the map $W\to x_\sigma W$ given by
$w\mapsto x_\sigma w^\sigma$ is an isomorphism of $FN$-modules, so
$W$ is $G$-invariant. It is obviously $FN$-irreducible.
\end{proof}

Conditions for a cyclic algebra to be a division ring can be found
in \cite[\S 14]{L}.

\begin{theorem} (Frobenius reciprocity) Let $S$ be a ring with a
subring $R$ such that $S=R\oplus T$ as $R$-modules. Let $W$ be an
$R$-module and let $V$ be an $S$-module. Then
$$
\Hom_R(W, \res_R^S V)\cong \Hom_S(\ind_R^S W, V)
$$
as $Z(S)\cap R$-modules, where $\ind_R^S W=S\otimes_R W$.
\end{theorem}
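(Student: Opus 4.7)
The plan is to invoke the standard tensor-hom adjunction: the claimed isomorphism is essentially the adjunction between the functors $\ind_R^S=S\otimes_R(-)$ and $\res_R^S$, and the hypothesis $S=R\oplus T$ does not actually seem to be required for the statement; what matters is that $S$ be an $S$-$R$-bimodule under its own multiplication, which is automatic.

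First I would construct a map $\Phi:\Hom_R(W,\res_R^S V)\to\Hom_S(\ind_R^S W,V)$ by setting $\Phi(f)(s\otimes w)=s\cdot f(w)$. This passes to the tensor product $S\otimes_R W$ because the formula is $R$-balanced, using the $R$-linearity of $f$ to absorb a factor of $r\in R$ from either side; $S$-linearity of $\Phi(f)$ is immediate. In the opposite direction I would define $\Psi(g)(w)=g(1\otimes w)$ and check that $\Psi(g)$ is $R$-linear via $r(1\otimes w)=1\otimes rw$ together with the $S$-linearity of $g$. The relations $\Psi\circ\Phi=\mathrm{id}$ and $\Phi\circ\Psi=\mathrm{id}$ are then one-line computations on pure tensors, since $\Phi(f)(1\otimes w)=f(w)$ and $\Phi(\Psi(g))(s\otimes w)=s\cdot g(1\otimes w)=g(s\otimes w)$.

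Finally I would check $Z(S)\cap R$-equivariance, which is the only step beyond the classical adjunction. If $z\in Z(S)\cap R$ then in particular $z$ commutes with every element of $R$, so $z\in Z(R)$; hence $(z\cdot f)(w):=z\cdot f(w)$ is a well-defined $Z(S)\cap R$-action on $\Hom_R(W,\res_R^S V)$, and $(z\cdot g)(x):=z\cdot g(x)$ is such an action on $\Hom_S(\ind_R^S W,V)$. Equivariance of $\Phi$ is then a direct consequence of the centrality of $z$ in $S$:
$$
\Phi(z\cdot f)(s\otimes w)=s\cdot z\cdot f(w)=z\cdot s\cdot f(w)=z\cdot\Phi(f)(s\otimes w).
$$
The only mild subtlety, and the single place where the hypothesis on $z$ is really used, is this last manipulation together with the observation that $Z(S)\cap R$ lands in $Z(R)$; everything else is formal.
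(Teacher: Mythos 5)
Your proof is correct and follows essentially the same route as the paper, which defines $\widehat{f}(s\otimes x)=sf(x)$ and notes that its inverse is the restriction map $g\mapsto g(1\otimes -)$; you merely spell out the balancedness, inverse, and $Z(S)\cap R$-equivariance checks in more detail. Your remark that the splitting $S=R\oplus T$ is not needed for this particular isomorphism is also accurate (the paper uses that hypothesis elsewhere, e.g.\ to embed $W$ in $\ind_R^S W$ in Proposition \ref{narr}).
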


\begin{proof} Given $f\in \Hom_R(W, \res_R^S V)$ define $\widehat{f}\in \Hom_S(\ind_R^S W, V)$ by
$$
\widehat{f}(s\otimes x)=sf(x),\quad s\in S, x\in W.
$$
This an isomorphism of $Z(S)\cap R$-modules whose inverse is the
restriction map.
\end{proof}

Let $D$ be a division ring. By an extension of $D$ we mean a
division ring $E$ such that $D$ is a subring of $E$. The degree
of $E$ over $D$ is $\dm_D E$ as right $D$-vector space.

\begin{prop}\label{narr} Let $F$ be a field, let $N\unlhd G$ be groups, let $W$ be an $FN$-module (necessarily irreducible)
with inertia group $T$ such that $V=\ind_N^G W$ is
$FG$-irreducible. Set $U=\Hom_{FN}(W,V)$,
$D=\End_{FN}(W)$ and $E=\End_{FG}(V)$, and consider the $F$-linear isomorphism $U\to E$, $u\mapsto\widehat{u}$, arising
from Frobenius reciprocity, namely $\widehat{u}(g\otimes
w)=g\otimes u(w)$. Then $T=N$ provided at least one of the
following conditions hold:

(a) The restriction of $U\to E$ to $D$ is surjective.

(b) $[G:N]$ is finite and $D$ has no extensions of degree $>1$ and
dividing $[G:N]$.

(c) $D=F$ is algebraically closed and $[G:N]$ is finite.

(d) $G$ is finite and $F$ is algebraically closed.

(e) The action of $N$ on $W$ can be extended to $T$.

(f) $\dm_F(W)=1$ and $T$ is a split extension of $N$.
\end{prop}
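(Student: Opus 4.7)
My plan is to compute the structure of $E=\End_{FG}(V)$ as a right $D$-module via the Frobenius reciprocity isomorphism $U\to E$, then read off $T=N$ in each case. First apply Mackey's Theorem \ref{m1} with $K=H=N$: normality of $N$ collapses each intersection $N\cap xNx^{-1}$ to $N$ and each $(N,N)$-double coset to an $N$-coset, giving
$$
\res_N^G V \cong \bigoplus_{xN\in G/N} W^x.
$$
Since the irreducible $W$ is cyclic, $\Hom_{FN}(W,-)$ commutes with the direct sum; moreover $\Hom_{FN}(W,W^x)=0$ unless $xN\subseteq T$, in which case it is a free right $D$-module of rank one. Thus $U\cong D^{[T:N]}$ as right $D$-module, and the natural embedding $D\hookrightarrow U$ (sending $d$ to $w\mapsto 1\otimes d(w)$) is precisely the summand indexed by $x=1$. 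A direct check shows the Frobenius reciprocity isomorphism $U\to E$ intertwines this right $D$-action with right multiplication in $E$ by the image of $D$ under $U\to E$; hence $E\cong D^{[T:N]}$ as right $D$-modules, and $E$ is a division ring (Schur's lemma, since $V$ is irreducible).

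For (a), the image of $D$ in $U$ is the $x=1$ summand, so surjectivity of $D\to E$ forces this summand to be all of $U$, i.e.\ $[T:N]=1$. For (b), $[T:N]$ is finite and divides $[G:N]$; if $[T:N]>1$, then $E$ would be a division ring extension of $D$ of degree $>1$ dividing $[G:N]$, contrary to hypothesis. Case (c) is the special case $D=F$ algebraically closed (which has no proper algebraic extensions). Case (d) reduces to (c): $G$ finite makes $FN$ a finite-dimensional $F$-algebra, so the irreducible (hence cyclic) $W$ is finite-dimensional, whence $D=\End_{FN}(W)$ is a finite-dimensional division algebra over the algebraically closed $F$, forcing $D=F$.

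For (e), if $\widetilde W$ is an $FT$-module extending $W$ then it is $FT$-irreducible (its restriction to $N$ is) and $\widetilde W|_N=W$. Exactness of $\ind_T^G$ (tensor with the free right $FT$-module $FG$) combined with irreducibility of $V=\ind_T^G(\ind_N^T W)$ forces $\ind_N^T W$ to be $FT$-irreducible. Frobenius reciprocity then gives
$$
\Hom_{FT}(\ind_N^T W,\widetilde W)\cong \Hom_{FN}(W,W)=D\neq 0,
$$
and any nonzero $\phi$ in this space is injective and surjective (both domain and codomain are irreducible), hence an $FT$-isomorphism $\ind_N^T W\cong \widetilde W$. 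Restricting to $N$ and applying $\Hom_{FN}(W,-)$ yields $D^{[T:N]}\cong D$ as right $D$-modules; invariant basis number (valid for every division ring) forces $[T:N]=1$. For (f), write $T=N\rtimes H$. Since $W$ is one-dimensional, $N$ acts on $W$ through a character $\chi:N\to F^*$, and $T$-invariance of $W$ is precisely $T$-invariance of $\chi$. Define $\widetilde\chi:T\to F^*$ by $\widetilde\chi(nh)=\chi(n)$; the computation
$$
\widetilde\chi(n_1 h_1\cdot n_2 h_2)=\chi(n_1\cdot{}^{h_1}n_2)=\chi(n_1)\chi(n_2)=\widetilde\chi(n_1 h_1)\widetilde\chi(n_2 h_2),
$$
using $T$-invariance of $\chi$, shows $\widetilde\chi$ is a character of $T$ extending $\chi$, reducing (f) to (e).

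The main obstacle is ensuring the arguments genuinely survive in the infinite-dimensional setting: $F$-dimension comparisons are unavailable, so in (b) one needs that $E\cong D^{[T:N]}$ already as right $D$-modules (giving $[E:D]=[T:N]$ directly, without $F$-dimensions), and the concluding step of (e) rests on invariant basis number for division rings to extract $[T:N]=1$ from $D^{[T:N]}\cong D$ even when this cardinal is infinite.
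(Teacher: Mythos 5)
Your argument is correct and follows essentially the same route as the paper: parts (a)--(d) rest on identifying $U\cong E$ as a right $D$-space of dimension $[T:N]$ with $D$ embedded as the identity-coset summand, and parts (e)--(f) rest on the irreducibility of $\ind_N^T W$ together with Frobenius reciprocity, with (f) reduced to (e) by extending the character along the splitting. The only differences are cosmetic: you spell out the Mackey-type computation giving $\dm_D U=[T:N]$ and finish (e) by comparing right $D$-dimensions (invariant basis number), whereas the paper notes instead that the $FT$-isomorphism $\ind_N^T W\to W$ is already bijective on the copy of $W$, forcing $T=N$ directly.
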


\begin{proof} $U$ is a right $D$-vector space via $u\cdot d=u\circ d$. As the multiplicity of $W$ in $\res^G_N V$ is $[T:N]$, we see 
that $\dm_D U=[T:N]$. Since the restriction of $U\to E$ to $D$ is a homomorphism of
$F$-algebras, it follows that $E$ is right $D$-vector space via
$e\cdot d=u\circ \widehat{d}$. By definition, $U\to E$ is an
isomorphism of right $D$-vector spaces and the inclusion map $D\to
U$ is a monomorphisms of right $D$-vector spaces. This yields (a)
and (b), which implies (c), which implies (d).

Suppose the action of $N$ on $W$ can be extended to $T$. Since
$\ind_N^G W$ is irreducible, it follows that $U=\ind_N^T W$ is
irreducible. Let $f:W\to \res_N^T W$ be the identity map and let
$\widehat{f}:U\to W$ be its extension to an $FT$-homomorphism.
Clearly, $\widehat{f}$ is surjective and hence bijective, since
$U$ is irreducible. But the restriction of $\widehat{f}$ to the
$FN$-submodule $W$ of $U$ is already bijective, which forces
$T=N$. This proves~(e), which easily implies (f). 
\end{proof}

Example \ref{inva} avoids conditions (a)-(f) of Proposition
\ref{narr}. Consider the following special case of Example
\ref{inva2}: $F=\C$, $K=\C(s,t)$, $\Gamma=\langle \gamma\rangle$,
where $\gamma(s)=s$ and $\gamma(t)=-t$, and $f$ is the normalized
factor set determined by $f(\gamma,\gamma)=s$. Since $s$ not a
square in $\C(s)$, it follows from \cite[\S 14]{La} that
$(K,\Gamma, f)$ is a 4-dimensional division algebra over its
center $\C(s,t^2)$. Thus $V$ irreducible, $[G:N]=2$ and $F=\C$,
still avoiding condition (c) of Proposition \ref{narr} because
$\dm_\C(D)$ is infinite.

An improvement of Proposition \ref{narr} may be available by
suitably extending to arbitrary groups \cite[Theorem 1 and
Corollary 1]{T3} of Tucker, which describes $\End_{FG}(V)$ as a
crossed product when $G$ is finite, as well as extending to
arbitrary fields her correspondence (cf. \cite{T, T2} and Conlon's
\cite{C}) between the left ideals of $\End_{FG}(V)$ and the
$FG$-submodules of $V$ when $F$ is algebraically closed.

\end{document}